\documentclass[11pt]{amsart}

\usepackage{xspace}
\usepackage{amssymb}
\usepackage[frame,ps,matrix,arrow,curve,rotate]{xy}
\usepackage{epsfig}
\usepackage{graphicx}

\usepackage{setspace}
\newtheorem{theorem}{Theorem}[section]
\newtheorem{proposition}[theorem]{Proposition}
\newtheorem{conjecture}[theorem]{Conjecture}
\newtheorem{question}[theorem]{Question}
\newtheorem{lemma}[theorem]{Lemma}
\newtheorem{corollary}[theorem]{Corollary}

\theoremstyle{remark}

\theoremstyle{definition}

\def\P{{\bf P}}
\def\C{{\bf C}}

\def\Q{{\bf{Q}}}
\def\w{\bigwedge\nolimits}

\def\O{{\mathcal O}}

\def\pG{\mathbb G}
\def\uU{\mathcal U}
\def\lL{\mathcal L}
\def\mM{\mathcal M}

\begin{document}

\title{Positivity results for spaces of rational curves}

\author{Roya Beheshti, Eric Riedl}
\date{}
\address{Department of Mathematics, Washington University in St. Louis}
\email{beheshti@wustl.edu}
\address{Department of Mathematics, the University of Notre Dame}
\email{eriedl@nd.edu}

\begin{abstract}
Let $X$ be a very general hypersurface of degree $d$ in $\P^n$. We investigate positivity properties of the spaces $R_e(X)$ of degree $e$ rational curves in $X$. We show that for small $e$, $R_e(X)$ has no rational curves meeting the locus of smooth embedded curves. We show that for $n \leq d$, there are no rational curves other than lines in the locus $Y \subset X$ swept out by lines. And we exhibit differential forms on a smooth compactification of $R_e(X)$ for every $e$ and $n-2 \geq d \geq \frac{n+1}{2}$.
\end{abstract}

\maketitle

\section{Introduction}
We work over $\C$ the field of complex numbers. Let $X$ be a smooth hypersurface of degree $d$ in $\P^n$, and for $e\geq 1$, denote by $R_e(X)$ the space of smooth rational curves of degree $e$ on $X$. In this paper, we study some geometric properties of $R_e(X)$. 

\begin{question}\label{question}
For which $d$, $n$ and $e$ with $n \geq d$ does the very general hypersurface $X \subset \P^n$ of degree $d$ have a rational curve in $R_e(X)$?
\end{question}

One major motivation for considering Question \ref{question} is to study rational surfaces in Fano hypersurfaces. A rational curve in $R_e(X)$ gives a rational surface in $X$, and conversely a rational surface in $X$ gives a non-constant map from $\P^1$ to a compactification of $R_e(X)$ for some $e\geq 1$. It is known that if $d < < \sqrt{n}$ every smooth hypersurface of degree $d$ contains rational surfaces, but it is not known if the same holds for higher degree Fano hypersurfaces.  It is conjectured that when $d=n \geq 5$, $X$ is not covered by rational surfaces: 
\begin{conjecture}\label{biratConj}
A very general hypersurface of degree $n$ in $\P^n$ is not covered by rational surfaces if $n \geq 5$.
\end{conjecture}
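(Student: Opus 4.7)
The plan is to reduce Conjecture~\ref{biratConj} to showing that $R_e(X)$ is not uniruled for every $e \geq 1$, and then combine the paper's two main approaches---the small-$e$ rigidity statement and the differential forms construction---pushed to the boundary case $d=n$.

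First, I would record the standard equivalence indicated in the motivation: $X$ is covered by rational surfaces if and only if for some $e$ there is an irreducible component of $R_e(X)$ whose evaluation map to $X$ is dominant and which contains a rational curve through its generic point. This converts the conjecture into a non-uniruledness statement, component by component, for the spaces $R_e(X)$ with $d=n$.

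Second, I would split on $e$. For small $e$, the paper's theorem that $R_e(X)$ has no rational curve meeting the locus of smooth embedded curves directly rules out rational curves through a generic point of any component with dominant evaluation. For large $e$, the plan is to produce a non-zero global holomorphic top form on a smooth compactification $\overline{R_e(X)}$, which forces non-uniruledness. The paper produces such forms when $\frac{n+1}{2}\leq d\leq n-2$; to reach $d=n$ I would pass to logarithmic differentials along the boundary divisors of a Kontsevich-type compactification and control their residues using the paper's vanishing on the line-swept locus $Y\subset X$, which applies precisely when $n\leq d$.

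The main obstacle is the boundary value $d=n$: the positivity that drives the differential forms construction degenerates there, so a genuinely new geometric input appears necessary. One natural line of attack is a degeneration argument---specializing $X$ to the union of a hyperplane and a hypersurface of degree $n-1$, classifying rational surfaces on the central fiber, and then applying semicontinuity to the Hilbert scheme of maps $\P^1\to R_e(X)$. A secondary difficulty is that any forms one constructs must be shown to extend across the boundary strata parametrizing reducible or multiply-covered stable maps, since such strata can a priori introduce poles that spoil the argument; controlling these requires a careful analysis of the normal sheaf of a boundary curve in $\overline{R_e(X)}$.
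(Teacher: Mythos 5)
The statement you are addressing is Conjecture~\ref{biratConj}, which the paper explicitly presents as \emph{open}: there is no proof of it in the paper, and the paper's actual theorems (Theorems~\ref{intro-main}, \ref{thm-kgonalCurvesInY}, \ref{thm-forms}) are partial results motivated by it. Your own text concedes that ``a genuinely new geometric input appears necessary,'' so what you have written is a research plan rather than a proof, and it cannot be accepted as one. Beyond that, several of the individual steps are not correct as stated. First, the reduction in your opening paragraph is the wrong statement: being covered by rational surfaces does \emph{not} reduce to uniruledness of the components of $R_e(X)$ with dominant evaluation. A family of rational surfaces covering $X$ only produces a locus $Z\subset\overline{\mM}_e(X)$ swept out by rational curves with $ev(\pi^{-1}(Z))=X$; this $Z$ can be a proper subvariety of a non-uniruled component, so one must control the entire rational-curve-swept locus, not the generic point. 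This is exactly why the paper emphasizes that Theorem~\ref{intro-main} excludes \emph{all} rational curves meeting the embedded locus, and why \cite{beheshtiStarr} is phrased in terms of what the rational-curve-swept locus sweeps out in $X$.

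Second, the two halves of your case split do not cover what they need to. Setting $d=n$ in the hypothesis of Theorem~\ref{intro-main} gives roughly $n\geq e^2-e+1$, so the ``small $e$'' case only reaches $e\lesssim\sqrt{n}$ (and requires $n\geq 45$); all of the difficulty of the conjecture lives in the remaining range of $e$. For the ``large $e$'' case, the forms produced by Theorem~\ref{thm-forms} are $(n-3)$-forms on a space of dimension $e(n+1-d)+n-4$, not top forms, and holomorphic $p$-forms with $p$ strictly below the dimension do not obstruct uniruledness (consider $C\times\P^1$ with $g(C)\geq 1$). So even in the range $\frac{n+1}{2}\leq d\leq n-2$ where the construction applies, it does not yield non-uniruledness, let alone the stronger covering statement, and at $d=n$ the construction fails outright since the proof needs $a_i<e(n+1-d)$ for the normal bundle summands, which is impossible when $d\geq n+1-3+\frac{2}{e}$ fails to hold in the required direction. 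Finally, the proposed mechanism of taking residues of logarithmic forms along boundary divisors and controlling them via the line-swept locus $Y$ has no support in the paper: Theorem~\ref{thm-kgonalCurvesInY} is a statement about curves inside $Y\subset X$, and no link is established between $Y$ and the boundary strata of $\overline{\mM}_e(X)$. The degeneration to a hyperplane plus a degree $n-1$ hypersurface is likewise only a suggestion. The conjecture remains open.
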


Conjecture \ref{biratConj} has important implications in understanding birational properties of varieties. Recall that a variety is unirational if it is rationally dominated by projective space, and it is rationally connected if there is a rational curve through two general points. It is uniruled if there is a rational curve through a general point. Every unirational variety is rationally connected, and indeed, is swept out by rational surfaces. It is expected that there exist rationally connected varieties which are not unirational, but to date, no examples of this have been proven. Proving Conjecture \ref{biratConj} would prove that a very general hypersurface of degree $n$ in $\P^n$ is not unirational. Since every hypersurface of degree $d \leq n$ is rationally connected, this  would give an example of a variety which is rationally connected but not unirational.

Riedl and Yang \cite{RY2} answer Question \ref{question} for $e=1$ by showing that if $n \leq \frac{d^2+3d+6}{6}$ there is no rational curve in $R_1(X)$, the space of lines on $X$. Beheshti \cite{beheshti} proves that $R_e(X)$ is not uniruled for $\frac{n+1}{2} \leq d \leq n-3$. Beheshti and Starr \cite{beheshtiStarr} consider the special case $d=n$ and prove that $X$ is not swept out by del Pezzos or rational surfaces ruled by curves of degree up to $n$. However, Question \ref{question} remains open and is presumably quite difficult in general. 


In this paper, we prove a few results about the positivity of $R_e(X)$. 
In Section 2, we consider the Kontsevich moduli space of stable maps $\overline{\mathcal M}_e(X)$, a compactification of 
$R_e(X)$, and show the following: 

\begin{theorem}\label{intro-main}
Fix an integer $n \geq 20$ and $e$, and suppose $d \leq n $ satisfies 
$$d^2+(2e-1)d \geq e(e+1)n - 3e(e-1) +2 \ \ \ \mbox{ if $e \geq 3$}$$
$$ d^2+(2e+1)d \geq (e+1)(e+2)n -3e(e+1)+2 \ \ \ \mbox{ if $e = 1,2$. } $$
Then for a very general hypersurface $X$ of degree $d$ in $\P^n$, there is no non-constant morphism  $\P^1 \to \overline{\mathcal M}_e(X)$ whose image intersects the locus of embedded rational curves in $X$.
\end{theorem}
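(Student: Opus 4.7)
My plan is an incidence-correspondence and dimension-count argument. A non-constant morphism $\gamma\colon \P^1 \to \overline{\mM}_e(X)$ whose image meets the smooth embedded locus corresponds, by pulling back the universal family over $\gamma(\P^1)$ and passing to the minimal resolution of the resulting ruled surface, to a morphism $f\colon S \to X$ from a Hirzebruch surface $S = \mathbb{F}_a$ for some $a \geq 0$ such that the general fiber of the ruling $\pi\colon S \to \P^1$ maps isomorphically onto a smooth rational curve of degree $e$ in $X$. Writing $L = f^*\O_{\P^n}(1) \sim eE + bF$ in the standard basis of $\mathrm{Pic}(\mathbb{F}_a)$ (with $E^2 = -a$, $F^2 = 0$, $E \cdot F = 1$), it suffices to show that for each admissible pair $(a,b)$, the very general hypersurface $X$ admits no such $f$.

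For each $(a,b)$, I would set up the incidence variety
\[
\mathcal{I}_{a,b} \;=\; \{(f,X) \in \Hom(\mathbb{F}_a, \P^n) \times \mathcal{H} \,:\, f^*\O(1) \sim L,\; f(\mathbb{F}_a) \subset X\}
\]
over $\mathcal{H} = \P H^0(\P^n, \O(d))$. The space of morphisms $f$ with prescribed $L$ has dimension at most $(n+1)\,h^0(\mathbb{F}_a, L) - 1$, and for fixed $f$ with image $\Sigma \subset \P^n$, the fiber over $f$ in $\mathcal{I}_{a,b}$ under the projection to $\mathcal{H}$ is a linear subspace whose codimension equals the rank of the restriction $H^0(\P^n, \O(d)) \to H^0(\Sigma, \O_\Sigma(d))$. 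Whenever this codimension strictly exceeds $(n+1)\,h^0(L) - 1$, the projection $\mathcal{I}_{a,b} \to \mathcal{H}$ fails to dominate. Since admissible $(a,b)$ form a countable set, verifying this inequality for every pair places the very general $X$ outside the union of all images, yielding the theorem.

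The explicit bound then comes from Riemann--Roch on $\mathbb{F}_a$ (giving $h^0(\mathbb{F}_a, L) = (e+1)(b+1) - \tfrac{1}{2}ae(e+1)$ when $L$ is nef, and likewise for $L^{\otimes d}$) followed by optimization over $(a,b)$. The split into two numerical regimes reflects two distinct extremal configurations. For $e = 1,2$ a smooth rational curve of degree $e$ lies on a plane of $\P^n$, so the image of $f$ can be as small as a plane --- and the corresponding restriction rank $\binom{d+2}{2}$ is strictly smaller than $h^0(\mathbb{F}_a, L^{\otimes d})$, producing the tighter bound with coefficient $(e+1)(e+2)\,n$. For $e \geq 3$ no smooth degree-$e$ rational curve lies on a plane, so the image of $f$ is necessarily a nondegenerate ruled surface; the binding case becomes a scroll of minimal self-intersection $L^2 = 2e$ embedded by the full linear system $|L|$, giving the weaker coefficient $e(e+1)\,n$.

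The main obstacle is verifying uniform extremality: one must check that no intermediate degeneration of $f$ --- $f$ failing to be birational onto its image (thereby shrinking the restriction rank below $h^0(L^{\otimes d})$), $f$ contracting the negative section of $\mathbb{F}_a$, or the image lying in a linear subspace of intermediate dimension --- produces a tighter constraint than the two extremal configurations identified above. Equivalently, $h^0(\Sigma, \O_\Sigma(d)) - (n+1)\,h^0(\mathbb{F}_a, L)$, viewed as a function of $(a,b)$ and the degeneration type, must attain its minimum at either the plane case (for $e \leq 2$) or the minimal-degree scroll (for $e \geq 3$). The hypothesis $n \geq 45$ is almost certainly used to absorb lower-order corrections in this asymptotic comparison so that the clean form of the bound in the theorem suffices.
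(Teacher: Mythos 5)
Your approach is genuinely different from the paper's, and it has a gap that is not a technical footnote but the entire difficulty. The paper does not do a dimension count at all: it shows (Proposition \ref{euler-pos}) that if the very general $X$ carried such a family, then sections of $\O_X(d)$ would lift to sections of $N_{f,\P^n}$ on the total space $S$ of the family, forcing $\chi(N_{f,X}(m)\otimes I_{C/S})\geq 0$ for a carefully chosen twist $m$ (here $m=e-1$ or $e$ according to the regularity of $I_{C/\P^n}$, which is the actual source of the dichotomy between $e\geq 3$ and $e\leq 2$ --- not the plane-versus-scroll dichotomy you conjecture); it then contradicts this with a Riemann--Roch computation on $S$ (Proposition \ref{prop-Nidealtwist} and Corollary \ref{euler-neg}, where Reider's theorem and the hypothesis $n\geq 45$ enter). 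The virtue of that method is precisely that it is insensitive to the degenerations your method must control.

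The concrete gap in your proposal is the step you defer as ``verifying uniform extremality.'' It is not a verification; in identifiable strata it is false as a dimension count. Fix $a$ and let $b\to\infty$ while requiring the image $\Sigma=f(\mathbb{F}_a)$ to span only a $\P^3$ (or any fixed $\P^k$ with $k$ small). The space of such maps has dimension on the order of $(k+1)h^0(\mathbb{F}_a,L)\sim (k+1)(e+1)b$, which is unbounded in $b$, while the number of conditions imposed on $X$ by containing $\Sigma$ is bounded above by $h^0(\P^k,\O_{\P^k}(d))=\binom{d+k}{k}$ independently of $b$. So for these strata $\dim \mathcal{I}_{a,b}$ exceeds $\dim\mathcal{H}$ by an arbitrarily large amount and non-dominance cannot be read off from dimensions; you would need a separate geometric argument (bounding the degree of $f$ onto its image, or bounding $b$ in terms of the span, or analyzing $X\cap\P^k$ directly), and no such argument is sketched. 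Two further problems: (i) the reduction to Hirzebruch surfaces is not justified --- the map $f$ descends through the contraction of a $(-1)$-curve in a fiber only when $f$ contracts that curve, so in general $S$ is a non-minimal ruled surface, which is why the paper's Corollary \ref{euler-neg} assumes only the absence of $(-1)$-curves contracted by both $f$ and $\pi$; and (ii) your own test of the count against the birational, projectively normal scroll case yields a requirement of roughly $d^2\gtrsim \frac{2(e+1)}{e}n$, far weaker than the theorem's $d^2\gtrsim e(e+1)n$, which signals that the binding constraints come from the degenerate strata you have not controlled rather than from the ``extremal configurations'' you identify.
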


This is a substantial generalization of results in Riedl-Yang, since it applies to curves with $e > 1$. Unlike results in \cite{beheshtiStarr}, it applies to hypersurfaces with $n > d$, and proves that there are no rational curves meeting the locus of smooth curves in $R_e(X)$, instead of merely proving that locus of $R_e(X)$ covered by rational curves does not sweep out $X$. It also generalizes results of \cite{beheshti}. 

In \cite[Corollary 10.10]{starr}, Starr computes the $\Q$-divisor class of the first chern class of the dualizing sheaf of $\overline{\mathcal M}_e(X)$ for a general 
hypersurface $X$, and shows that the canonical divisor of $\overline{\mathcal M}_e(X)$  is big for $n$ less that about $d^2$. 
If $\overline{\mM}_e(X)$ were irreducible, of the expected dimension, and had canonical singularities, then $\overline{\mM}_e(X)$ would be of general type for $d$ large, $n \geq d+6$, and $2n + 2 \leq d^2 + d$. Results of Harris, Roth, Starr, Beheshti, Kumar, Riedl, and Yang \cite{HRS, beheshtiKumar, riedl-yang1} prove that $\overline{\mM}_e(X)$ is irreducible and of the expected dimension for $d \leq n-2$. However, $\overline{\mM}_e(X)$ is not always known to have canonical singularities. The best current results are for $e+d \leq n$, due to work of Starr \cite{starr}.

Theorem \ref{intro-main}, along with results of \cite{starr} and \cite{beheshti}, suggests that there should be non-zero 
pluri-canonical forms on $\overline{\mathcal M}_e(X)$ if $d \geq (n+1)/2$ or if $d<(n+1)/2$ and the inequality of 
Theorem \ref{intro-main} is satisfied. In \cite{dejong-starr}, de Jong and Starr give a general construction of differential forms on any desingularization of $\overline{M}_e(X)$, the coarse moduli scheme of $\overline{\mM}_e(X)$, and use this to construct non-zero pluri-canonical forms on these schemes when $X$ is a general cubic fourfold and $e \geq 5$ is an odd number. In Section 3, we use their construction and show 
\begin{proposition}
If $\frac{n+1}{2} \leq d \leq n-2$ and $X$ is any smooth hypersurface of degree 
$d$ in $\P^n$, then there are non-zero differential forms on any disingularization of $\overline{M}_e(X)$ for every $e \geq 1$.
\end{proposition}

It is interesting to investigate whether these forms can be used to construct non-zero pluri-canonical forms when $d \geq (n+1)/2$. 

Finally, in the last section we generalize results of \cite{RY2} in a different direction, and show 
\begin{theorem}\label{gonal}
If $n \leq \frac{d(d+3)}{6}+1-\frac{k}{3}$, then a very general degree $d$ hypersurface $X$ in $\P^n$ contains no $k$-gonal curves in $F_1(X)$.
\end{theorem}

If $n \leq d$, the lines on $X$ will sweep out a proper subvariety $Y \subset X$. Results of Clemens and Ran \cite{clemensRan} seem to suggest that $Y$ will be the ``most negative'' subvariety of $X$. They prove that for $d \geq \frac{3n+1}{2}$, any subvarieties of $X$ without effective canonical bundle (such as rational curves), must lie in $Y$. The following corollary of Theorem \ref{gonal} proves that in contrast to this, $Y$ never contains rational curves for $n \leq d$. 

\begin{corollary}
\label{thm-kgonalCurvesInY}
Let $X \subset \P^n$ be a very general degree $d$ hypersurface with $d \geq n$. Let $Y \subset X$ be the locus swept out by lines. Then $Y$ contains no $k$-gonal curves other than lines if $k \leq \frac{d^2+3d+6-6n}{2(n-1)}$. In particular, $Y$ contains no rational curves other than lines.
\end{corollary}

\textbf{Acknowledgements:} We are greatful to the anonymous referee for numerous helpful suggestions and corrections. The second author was partially supported by the NSF RTG grant number DMS-1246844 while doing much of this research.

\section{Rational curves in $R_e(X)$}\label{ReSection}
For a hypersurface $X \subset \P^n$, we denote by $\overline{\mathcal M}_e(X)$ the Kontsevich moduli space of stable maps of degree $e$ from curves of genus 0 to $X$.  The goal of this section is to prove the following.

\begin{theorem}\label{main}
Fix an integer $e$ and $n$ ($n \geq 20$), and suppose $d\leq n$ satisfies the following:  
$$d^2+(2e-1)d \geq e(e+1)n - 3e(e-1) +2 \ \ \ \mbox{ if $e \geq 3$}$$
$$ d^2+(2e+1)d \geq (e+1)(e+2)n -3e(e+1)+2 \ \ \ \mbox{ if $e = 1,2$. } $$
Then for a very general hypersurface $X$ of degree $d$ in $\P^n$, there is no non-constant morphism  $\P^1 \to \overline{\mathcal M}_e(X)$ whose image intersects the locus of embedded smooth rational curves in $X$.
\end{theorem}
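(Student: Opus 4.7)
I argue by contradiction. Suppose that for a very general hypersurface $X$ of degree $d$ in $\P^n$ satisfying the hypothesis there is a non-constant morphism $\phi:\P^1\to\overline{\mM}_e(X)$ whose image meets the locus of smooth embedded rational curves. Pulling back the universal curve along $\phi$ and resolving, one obtains a Hirzebruch surface $p:\mathcal S\cong F_k\to\P^1$ together with an evaluation map $\mathrm{ev}:\mathcal S\to X$ whose general fibre maps isomorphically to a smooth embedded degree-$e$ rational curve of $X$. Let $\Sigma:=\mathrm{ev}(\mathcal S)\subset X\subset\P^n$; this is a rational ruled surface, embedded via a sublinear system of $|\O_{\mathcal S}(a,e)|$ for some $a\geq 1$ determined by $\phi$. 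The plan is to combine a Voisin-style spreading-out argument with cohomology computations on $\mathcal S$ to produce an inequality in $(d,n,e,a,k)$ which, after minimising over $(a,k)$, contradicts the numerical hypothesis.

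\textbf{Voisin-style constraint.} Because the existence of $\phi$ is a constructible property of $X$, by very generality we may spread $\phi$ (and hence $\Sigma$) over a Zariski open $U$ of the hypersurface parameter space to obtain a family $\Sigma_u\subset X_u$, $u\in U$. The first-order condition for $\Sigma_u\subset X_u$ to persist, read off the normal-bundle sequence
\[
0\to N_{\Sigma/X}\to N_{\Sigma/\P^n}\to\O_\Sigma(d)\to 0,
\]
forces every $\dot F\in H^0(\P^n,\O(d))$ to restrict to $\Sigma$ as a section in the image of $H^0(\Sigma,N_{\Sigma/\P^n})\to H^0(\Sigma,\O_\Sigma(d))$, modulo the contribution of the moduli of $\Sigma$. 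Schematically this gives
\[
\dim\mathrm{im}\bigl(H^0(\P^n,\O(d))\to H^0(\Sigma,\O_\Sigma(d))\bigr)\leq h^0(\Sigma,N_{\Sigma/\P^n})+\dim(\text{moduli of }\Sigma).
\]

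\textbf{Computation on the Hirzebruch surface.} Both sides can be estimated explicitly on $\mathcal S=F_k$. On the left, $\dim\mathrm{im}$ is bounded below using that $H^0(\mathcal S,\mathrm{ev}^*\O(d))=H^0(F_k,\O(da,de))$ and that the restriction map from $H^0(\P^n,\O(d))$ to this space has large image for nondegenerate $\mathrm{ev}$. On the right, the key input is the exact sequence
\[
0\to\O_C\to N_{C/X}\to N_{\Sigma/X}\big|_C\to 0
\]
for a general fibre $C$ of $p$ — the subbundle $\O_C$ appears because $N_{C/\mathcal S}$ is trivial — together with known balanced splitting-type results for $N_{C/X}$ on very general hypersurfaces, which presumably account for the hypothesis $n\geq 45$. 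Substituting these estimates and optimising over the bidegree parameters $(a,k)$ produces the numerical bound of the theorem.

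\textbf{Main obstacle.} The hard part is organising the cohomology bookkeeping so that the exact combinatorial coefficients of the bound emerge, and handling the degenerate cases where $\mathrm{ev}:\mathcal S\to\Sigma$ fails to be birational, where $\Sigma$ is singular, or where $\mathrm{ev}$ ramifies along a ruling — in all of these the normal-bundle sequence above must be corrected. The dichotomy between $e\geq 3$ and $e=1,2$ in the statement reflects that for small $e$ the minimal bidegree $(1,e)$ is too degenerate to give the optimal estimate and one must pass to the next bidegree, effectively replacing $e$ by $e+1$; this accounts for the shape of the second inequality.
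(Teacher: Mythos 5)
Your outline shares the paper's starting point (argue by contradiction, spread the family over the moduli of hypersurfaces, and extract a first-order constraint from the normal bundle sequence), but the argument as written has genuine gaps and would not close. First, you cannot assume the resolved total space $\mathcal S$ is a Hirzebruch surface: the image of $\phi$ may meet the boundary of $\overline{\mM}_e(X)$, so $\pi:\mathcal S\to\P^1$ can have reducible fibers, and the paper only normalizes the situation by blowing down $(-1)$-curves in fibers contracted by $f$ (this hypothesis is exactly what feeds into Reider's theorem later). Second, and more seriously, your ``schematic'' dimension inequality comparing $\dim\mathrm{im}\bigl(H^0(\P^n,\O(d))\to H^0(\Sigma,\O_\Sigma(d))\bigr)$ with $h^0(\Sigma,N_{\Sigma/\P^n})$ is not the mechanism that produces the bound, and working with the image surface $\Sigma$ rather than the normal sheaf of the map $f$ breaks down whenever $f$ is not an embedding. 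The paper's actual contradiction is a sign contradiction for the Euler characteristic $\chi(N_{f,X}(m)\otimes I_{C/S})$, where $C$ is a general fiber and $m=e-1$ (resp.\ $m=e$) for $e\ge 3$ (resp.\ $e=1,2$): on one hand a Hirzebruch--Riemann--Roch computation, combined with nefness of $2H+2C+K$ (via Reider) and Kodaira vanishing, shows this Euler characteristic is negative exactly under the stated numerical hypothesis and $n\ge 45$; on the other hand, the spreading-out statement (your ``Voisin-style constraint,'' made precise as a containment of images in $H^0(S,f^*\O(d))$), together with $(m+1)$-normality and $m$-regularity of $I_{C/\P^n}$ and Clemens' theorem that $g^*T_X(1)$ is globally generated for every $g:\P^1\to X$ with $X$ very general, forces it to be nonnegative. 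None of these steps appears in your proposal, and the phrase ``substituting these estimates and optimising over $(a,k)$ produces the numerical bound'' is precisely the part that has to be done.

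Your explanation of the dichotomy between $e\ge 3$ and $e=1,2$ is also not correct: it has nothing to do with the bidegree of the ruled surface being degenerate. It comes from the choice of the twist $m$, which is dictated by the Castelnuovo--Mumford regularity of the ideal sheaf of an embedded smooth rational curve of degree $e$ in $\P^n$ (one needs $I_{C/\P^n}$ to be $m$-regular and $C$ to be $(m+1)$-normal so that the restriction map $H^0(S,N_{f,X}(m))\to H^0(C,N_{f,X}(m)|_C)$ is surjective), and this forces $m=e$ rather than $m=e-1$ when $e=1,2$. Finally, the role of the hypothesis $n\ge 45$ is not to guarantee ``balanced splitting'' of $N_{C/X}$; it is a purely numerical condition ensuring that the lower-order terms $d_t-c_t-16+8K\cdot C+16H\cdot C$ in the Euler characteristic estimate are nonpositive.
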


Before proving Theorem \ref{main}, we set some notation. Let $S$ be a smooth rational surface admitting a map to $\P^1$ with general fiber $C$ isomorphic to $\P^1$. Let $f: S \to \P^n$ be a generically finite morphism whose image is contained in a smooth hypersurface $X$ of degree $d$ in $\P^n$. We denote by 
$N_{f, \P^n}$ the normal sheaf of $f$, i.e. the cokernel of the map $T_S \to f^* T_{\P^n}$. Similary, we denote by $N_{f,X}$ the normal sheaf of $f$ considered as 
a morphism from $S$ to $X$. There is a short exact sequence 
\begin{equation}\label{normal}
0 \to N_{f, X} \to N_{f, \P^n} \to f^*\O_{\P^n}(d) \to 0
\end{equation}

The strategy of the proof will be to compute the Euler characteristic of $N_{f,X}(t) \otimes I_{C/S}$ for various values of $t$. The main technique will be to argue that for $X$ a general hypersurface of the appropriate degree and for $t$ carefully chosen, the Euler characteristic will be positive. We contrast this with the following direct computation of the Euler characteristic.

\begin{proposition}
\label{prop-Nidealtwist}
Assume $B$ is a smooth projective curve and $S$ a smooth surface admitting a fibration $\pi: S \to B$ with rational fibers. Let $f: S \to X$ be a map from $S$ to a smooth degree $d$ hypersurface $X \subset \P^n$, and let $C$ be a general fiber of $\pi$. Suppose $f|_C$ is an embedding. Let $H = f^* \O(1)$ and $K$ be the canonical class of $S$. Then we have
\[ \chi(N_{f,X}(t) \otimes I_{C/S}) = \frac{b_t}{2} H^2 + \frac{c_t}{2} H \cdot K - 2K^2 + d_t ,\]
where
\[ b_t = (n-3)t^2+2(n+1-d)t + n+1-d^2 , \]
\[ c_t = -(n-5)t+d-n-1 \]
and
\[ d_t = (-(n-3)t-n-1+d) \; H \cdot C + (n+9) \; \chi(\O_S) -n+5 . \]
\end{proposition}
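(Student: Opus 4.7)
The plan is to split the Euler characteristic using the two short exact sequences available and then compute each piece by Hirzebruch--Riemann--Roch on $S$ and on the rational fiber $C$. Since $f|_C$ is an embedding, $f$ is an immersion on an open neighborhood of $C$, so both $N_{f,X}$ and $N_{f,\P^n}$ are locally free near $C$. Twisting the normal bundle sequence (\ref{normal}) by $f^*\O(t)\otimes I_{C/S}$ remains exact, giving
\[
\chi(N_{f,X}(t)\otimes I_{C/S})=\chi(N_{f,\P^n}(t)\otimes I_{C/S})-\chi(f^*\O(d+t)\otimes I_{C/S}).
\]
Moreover, since $I_{C/S}=\O_S(-C)$ is a line bundle and the sheaves in question are torsion-free along $C$, the sequence $0\to F(-C)\to F\to F|_C\to 0$ lets me replace each term above by $\chi(F)-\chi(F|_C)$, reducing everything to four Euler characteristic computations, two on $S$ and two on $C\cong\P^1$.

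For the surface side, I would compute the Chern classes of $N_{f,\P^n}$ from the Euler sequence $c(f^*T_{\P^n})=(1+H)^{n+1}$ combined with the tangent--normal sequence $0\to T_S\to f^*T_{\P^n}\to N_{f,\P^n}\to 0$, obtaining
\[
c_1(N_{f,\P^n})=(n+1)H+K,\qquad c_2(N_{f,\P^n})=\tbinom{n+1}{2}H^2+(n+1)H\cdot K+K^2-c_2(S).
\]
Applying HRR to the rank $n-2$ bundle $N_{f,\P^n}(t)$ (using the standard twist formulas $c_1(E\otimes L)=c_1(E)+rL$ and the analogue for $c_2$) and to the line bundle $f^*\O(d+t)$, then subtracting, produces exactly the $b_t/2$ and $c_t/2$ coefficients of $H^2$ and $H\cdot K$ together with residual terms in $K^2$, $c_2(S)$, and $\chi(\O_S)$. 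Noether's formula $c_2(S)=12\chi(\O_S)-K^2$ collapses these into $-2K^2+(n+9)\chi(\O_S)$.

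For the curve side, Riemann--Roch on $C\cong\P^1$ gives $\chi(E)=\deg E+\operatorname{rk}(E)$ for any locally free sheaf, so $\chi(N_{f,\P^n}(t)|_C)$ and $\chi(f^*\O(d+t)|_C)$ are read off from $c_1(N_{f,\P^n})\cdot C$, $H\cdot C$, and the ranks. Subtracting contributes an $H\cdot C$ term with coefficient $-(n-3)t-n-1+d$ together with an additive constant involving $K\cdot C$. The final rewriting of that constant as $K\cdot C\,(n-5)/2$ uses adjunction: since $C$ is a rational fiber of the ruling $\pi$, one has $C^2=0$ and $g(C)=0$, so $K\cdot C=-2$, and the numerical constants can be repackaged as stated.

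The main obstacle is purely combinatorial bookkeeping: carefully expanding the rank-dependent twist formulas for a rank $(n-2)$ bundle, tracking coefficients of $t^k$ in $H^2$, $H\cdot K$, and $H\cdot C$ terms, and correctly absorbing the curve contribution into the single constant $d_t$. No geometric input beyond HRR, Noether's formula, and adjunction on $C$ is required.
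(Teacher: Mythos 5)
Your proposal is correct and follows essentially the same route as the paper: both reduce the computation to Hirzebruch--Riemann--Roch via additivity of $\chi$ over the normal-bundle sequences and finish with Noether's formula and $K \cdot C = -2$. The only difference is bookkeeping --- you split off the restriction to $C$ and apply HRR to $N_{f,\P^n}$ directly via its Chern classes, while the paper keeps the $I_{C/S}$ twist throughout and expands further via the Euler sequence --- and your organization does reproduce $b_t$, $c_t$, and $d_t$ exactly.
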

\begin{proof}
This is a calculation involving additivity of the Euler characteristic. Using the exact sequence (\ref{normal}), we see
\[ \chi(N_{f,X} \otimes I_{C/S}(t)) = \chi(N_{f,\P^n} \otimes I_{C/S}(t)) - \chi(f^* \O_{\P^n}(d+t) \otimes I_{C/S}) . \]
Using the sequence defining $N_{f,\P^n}$ and the Euler sequence for $\P^n$, we see that 
\begin{equation*}
\begin{split}
 &\chi(N_{f,X} \otimes I_{C/S}(t)) \\
 &= \chi(T_{\P^n} \otimes I_{C/S}(t)) - \chi(T_S \otimes I_{C/S}(t)) - \chi(f^* \O_{\P^n}(d+t) \otimes I_{C/S})\\
  &=  (n+1) \chi(f^* I_{C/S}(t+1)) - \chi(I_{C/S}(t)) - \chi(f^* \O_{\P^n}(d+t) \otimes I_{C/S})  - \chi(T_S \otimes I_{C/S}(t)).
 \end{split}
 \end{equation*}
 Using Hirzebruch-Riemann-Roch, this becomes 
 \begin{equation}\label{eulerchar}
 \begin{split}
 & (n+1) \frac{((t+1)H-C) \cdot ((t+1)H - C - K)}{2} - \frac{(tH-C) \cdot (tH - C - K)}{2} \\
& - \frac{((t+d)H-C) \cdot ((t+d)H - C - K)}{2} + (n-1)\chi(\O_S) -  \chi(T_S \otimes I_{C/S}(t)). 
\end{split}
\end{equation}
We have $$c_1(T_S \otimes I_{C/S}(t)) = c_1(T_S) +2 (-C+tH) = -K+2(-C+tH)$$ and $$c_2(T_S \otimes I_{C/S}(t)) 
=c_2(T_S) -K \cdot (-C+tH)+(-C+tH)^2.$$ Via the splitting principle, we can introduce variables $\alpha$ and $\beta$ with $\alpha + \beta = c_1(T_S \otimes I_{C/S}(t))$ and $\alpha \beta = c_2(T_S \otimes I_{C/S}(t))$. By Hirzebruch-Riemann-Roch, we get
\[ \chi(T_S \otimes I_{C/S}(t)) = 2\chi(\O_S) + \frac{\alpha (\alpha-K)}{2} + \frac{\beta (\beta-K)}{2} \]
\[ = 2\chi(\O_S) -\alpha \beta + \frac{(\alpha+\beta)^2-(\alpha+\beta) \cdot K}{2} . \] 
Plugging in the chern classes of $T_S \otimes I_{C/S}(t)$ for $\alpha + \beta$ and $\alpha \beta$, we obtain
$$    \chi(T_S \otimes I_{C/S}(t)) = K^2-c_2(T_S)-2tH\cdot K+2K\cdot C +t^2H^2-2tH \cdot C +2 \chi(\O_S).$$
We have $K\cdot C = -2$. So putting the above equation in (\ref{eulerchar}), collecting like terms, and using Noether's formula $c_2(T_S) = 12 \chi(\O_S) - K^2$, we obtain the result.
\end{proof}

\begin{corollary}\label{euler-neg}
Assume $t \geq 1$, $n \geq 20$ and $d \leq n$. If $S$ contains no $-1$-curves contracted by both $f$ and $\pi$, and $$d^2+d(2t+1) \geq n(t+2)(t+1) -3t^2-3t+2,$$  then $\chi(N_{f,X} \otimes I_{C/S}(t)) < 0$.
\end{corollary}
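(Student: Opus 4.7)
The plan is to substitute the intersection data for a general rational fiber into the formula of Proposition \ref{prop-Nidealtwist} and bound the resulting expression using the numerical hypothesis. Since $C \cong \P^1$ is a general fiber of $\pi$, we have $C^2 = 0$ and, by adjunction, $K \cdot C = -2$; substitution simplifies the $d_t$ term.

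A direct algebraic rearrangement of the hypothesis yields
\[ c_t - b_t = d^2 + (2t+1)d - \bigl[(n-3)t^2 + (3n-3)t + (2n+2)\bigr] \geq 0, \]
so $b_t \leq c_t = d - (n-5)t - (n+1) \leq 0$ in the relevant range. Because $H = f^*\O(1)$ is base-point-free, Bertini produces a smooth irreducible $H_0 \in |H|$, and adjunction on $H_0$ gives $H^2 + H \cdot K = 2g(H_0) - 2 \geq -2$. Combined with $H^2 \geq 0$ (from generic finiteness of $f$), this produces the bound
\[ \tfrac12 b_t H^2 + \tfrac12 c_t H \cdot K \leq \tfrac12 c_t (H^2 + H \cdot K) \leq -c_t. \]

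Next, writing $S$ as a composition of $k$ blow-ups of the relative minimal model $S_0 \to B$, we have $\chi(\O_S) = 1 - g(B)$ and $K^2 = 8\chi(\O_S) - k$, so that $-2K^2 + (n+9)\chi(\O_S) = (n-7)\chi(\O_S) + 2k$. The no-$(-1)$-curves hypothesis forces every exceptional $(-1)$-curve $E_i$ (contracted by $\pi$) to satisfy $H \cdot E_i \geq 1$; moreover the strict transform $\tilde C_i$ of the affected fiber is also a $(-1)$-curve contracted by $\pi$, so $H \cdot \tilde C_i \geq 1$ as well. Since $E_i + \tilde C_i$ is rationally equivalent to a general fiber $C$, one obtains $b_i := H \cdot E_i \in [1, H \cdot C - 1]$, so in particular $H \cdot C \geq 2$ whenever $k \geq 1$; and the constraint $H^2 = H_0^2 - \sum b_i^2 \geq 0$ bounds $k$ in terms of $H_0^2$.

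Assembling the estimates, the upper bound for $\chi$ is dominated by the term $(H \cdot C)(d - (n-3)t - (n+1)) \leq -(H \cdot C)(n-2)$, which for $n \geq 45$ overwhelms the positive contributions $-c_t$, $(n-7)\chi(\O_S) + 2k$, and $-(n-5)$, yielding $\chi < 0$. The most delicate part of the argument is the balance between the negative contribution of $\tfrac{b_t}{2} H^2$ (which grows with $H_0^2$) and the positive contribution $+2k$ from blow-ups (whose count is itself bounded in terms of $H_0^2$); it is this tight cancellation, rather than any single coarse inequality, that calibrates the explicit cutoff $n \geq 45$.
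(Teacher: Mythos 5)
Your reduction of the leading terms is sound: the identity $c_t-b_t=d^2+(2t+1)d-\bigl[(n-3)t^2+(3n-3)t+(2n+2)\bigr]$ does match the numerical hypothesis, and the estimate $\tfrac12 b_tH^2+\tfrac12 c_tH\cdot K\le\tfrac12 c_t(H^2+H\cdot K)\le -c_t$ (using $b_t\le c_t\le 0$, $H^2\ge 0$, $H\cdot(H+K)\ge -2$) is essentially the paper's step. A minor caveat: the paper gets $H\cdot(H+K)\ge -2$ from vanishing of $H^1(f^*\O(-1))$ rather than from Bertini; your smooth member $H_0\in|H|$ need not be irreducible or even connected when $f$ has degree greater than one onto its image, so adjunction alone only yields $H\cdot(H+K)\ge -2r$ with $r$ the number of components.

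The genuine gap is the treatment of $-2K^2$. Writing $K^2=8\chi(\O_S)-k$ trades the problem for a bound on the number $k$ of blow-downs to the relative minimal model, and you never obtain one. Your constraint $H^2=D^2-\sum b_i^2\ge 0$ with each $b_i\ge 1$ gives only $k\le\sum b_i^2=D^2-H^2$, where $D$ is the induced class on $S_0$; but $D^2$ is unbounded, and there is no compensating negative term in your final estimate: you have already spent $\tfrac{b_t}{2}H^2$ in reaching $-c_t$, and even the residual $\tfrac{b_t-c_t}{2}H^2$ vanishes at the boundary of the numerical hypothesis, and also whenever $H^2=0$, which is compatible with $k$ arbitrarily large (e.g.\ $D^2=k$, all $b_i=1$). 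So the term $+2k$ survives uncontrolled and $\chi<0$ does not follow. The paper's key idea, which is missing here, is to use the no-$(-1)$-curve hypothesis through Reider's theorem to show that $2H+2C+K$ is basepoint free, hence nef, hence of non-negative self-intersection; expanding $(2H+2C+K)^2\ge 0$ bounds $-2K^2$ above by $8H^2+8H\cdot K+16H\cdot C+8C\cdot K$, quantities the rest of the argument already controls. That nefness statement is the actual role of the $(-1)$-curve hypothesis; your use of it, to force $H\cdot E_i\ge 1$, makes the exceptional contribution larger rather than controlling $K^2$. A further small error: $E_i+\tilde C_i$ is rationally equivalent to a general fiber only for a single blow-up of a point on a smooth fiber, not for iterated blow-ups, where the fiber acquires more components with multiplicities.
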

\begin{proof}
This is an intersection theory calculation on $S$. First observe that $2H+2C+K$ is basepoint free, and hence, nef. If it were not, then by Reider's theorem \cite{reider}, there would be an effective divisor $E$ on $S$ with either $E \cdot (2H+2C) = 1$ and $E^2 = 0$ or $E \cdot (2H+2C) = 0$ and $E^2 = -1$. The first case is impossible since $E \cdot (2H+2C)$ must be even. In the second case, we would have $H \cdot E = 0 = C \cdot E$, which implies that $E$ is a $(-1)$-curve contracted by both $\pi$ and $f$, and contradicts our assumption. It follows that $2H+2C+K$ is nef. So
\begin{equation} \label{K2ineq}
\begin{split}
0 \leq (2H+2C+K)^2 & = 4H^2+K^2+4H \cdot K + 8 H \cdot C -8 
\end{split}
\end{equation}
since $K \cdot C = -2$ and $C^2=0$.
Note that $H^1(f^* \O(-1)) = 0$ by Kodaira vanishing, so by Hirzebruch-Riemann-Roch and the fact that $\chi(\O_S) \leq 1$ we see that
\begin{equation} \label{HHplusKineq}
H \cdot (H+K) = 2 \chi(f^* \O_X(-1)) - 2 \chi (\O_S)\geq -2 \chi(\O_S) \geq -2 . 
\end{equation}
By Proposition \ref{prop-Nidealtwist} and the relation (\ref{K2ineq}) we see that
\begin{equation*}
\begin{split} 
\chi(N_{f,X}(t) \otimes I_{C/S}) & = \frac{b_t}{2} H^2 + \frac{c_t}{2} H \cdot K - 2K^2 + d_t \\
& \leq  \frac{b_t}{2} H^2 + \frac{c_t}{2} H \cdot K+8H\cdot (H+K)+16 H \cdot C-16+d_t\\
& = \frac{b_t-c_t}{2} H^2+\frac{c_t+16}{2} (H \cdot (H+K)+2)+d_t-c_t+16H \cdot C-32. \\
\end{split}
\end{equation*}
Note that since by our assumption $d \leq n$ and $n \geq 20$, we have $c_t +16 \leq 0$, so the above inequality and (\ref{HHplusKineq}) give
$$\chi(N_{f,X}(t) \otimes I_{C/S})  \leq \frac{b_t-c_t}{2} H^2+d_t-c_t+16H \cdot C-32. 
$$
We see that $b_t - c_t \leq 0$ precisely when $(n-3)t^2+(3n-3-2d)t+2n+2-d^2-d \leq 0$, or
\[ d^2+d(2t+1) \geq n(t+2)(t+1) -3t^2-3t+2 . \]
It remains to show that $d_t-c_t+16H \cdot C-32 < 0$.   
By Proposition \ref{prop-Nidealtwist}  we have 
\begin{equation*}
\begin{split}
& d_t-c_t+16H \cdot C-32 \\& = -(n-d+t(n-3)-15)(H \cdot C -1)-2t+ (n+9) \chi(\O_S) -(n-5) -16 \\
& \leq  -(n-d+t(n-3)-15)(H \cdot C -1)-2t-2.
\end{split}
\end{equation*}
We see that this will be negative if $n-d+t(n-3)-15 \geq 0$, which will happen for $n \geq 20$ since 
$t \geq 1$ and $d\leq n$.

\end{proof}

\medskip
\begin{proposition}\label{euler-pos}

Fix $e \geq 1$. Let $m=e-1$ if $e \geq 3$ and $m=e$ if $e=1,2$. Assume that for a very general hypersurface $X$ of degree $d$, there is a map $\phi: \P^1 \to \overline{\mathcal M}_{e}(X)$ whose image intersects the locus of embedded smooth rational curves. Then 
there is a smooth surface 
$S$ with two morphisms $f:S \to X$ and $\pi: S \to \P^1$ such that $f$ 
maps a general fiber $C$ of $\pi$ isomorphically to a smooth rational curve of degree $e$ in $X$ and 
\begin{enumerate}
\item[(a)] 
The image 
of the pull-back map $H^0(X, \O_{X}(d)) \to H^0(S, f^*\O_{\P^n}(d))$ is contained in the image of the map 
$$H^0(S, N_{f, \P^n}) \to H^0(S, f^*\O_{\P^n}(d)).$$
\item[(b)] The restriction map $H^0(S,N_{f,X}(m)) \to H^0(C,N_{f,X}(m)|_C)$ is surjective. \\
\item[(c)] The Euler characteristic $\chi(N_{f,X}(m) \otimes I_{C/S}) \geq 0$.
\end{enumerate} 
\end{proposition}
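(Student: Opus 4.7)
The plan is to build $S$ by pulling back the universal curve along $\phi$ and resolving singularities, then verify (a), (b), (c), with (b) being the subtle step that may require modifying $S$. Concretely, let $\mathcal{C} \to \overline{\mM}_e(X)$ be the universal curve; pull back via $\phi$ to get $\mathcal{C}_0 \to \P^1$ and take a resolution $S \to \mathcal{C}_0$. This yields $\pi: S \to \P^1$ and $f: S \to X$, and because $\phi$ meets the locus of embedded smooth curves (an open condition), the general fiber of $\pi$ maps isomorphically to a smooth embedded rational curve of degree $e$ in $X$.

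For (a), the key input is that $X$ is very general. Let $B$ be the open locus of smooth hypersurfaces in $|\O_{\P^n}(d)|$ and consider the relative Kontsevich space $\overline{\mM}_e(\mathcal{X}/B) \to B$. The subset of $b \in B$ admitting a map $\phi_b: \P^1 \to \overline{\mM}_e(X_b)$ meeting the embedded smooth locus is a countable union of constructible subsets of $B$; since it contains the very general point $b_0 = [X]$, it must contain a dense open subset. Thus $\phi$ fits into a family $\{\phi_b\}$ over some irreducible variety $T$ dominating $B$. Pulling back and resolving in families yields $f_t: S_t \to \P^n$ with $f_t(S_t) \subset X_t$. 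For each $G \in T_{b_0} B = H^0(\P^n, \O_{\P^n}(d))/\C \cdot F$, differentiating this family produces $\sigma_G \in H^0(S, N_{f,\P^n})$; the first-order expansion of $F_t \circ f_t \equiv 0$ identifies the image of $\sigma_G$ in $H^0(S, f^*\O_{\P^n}(d))$ with $-f^*G$. Varying $G$ proves (a).

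For (b), if the restriction map on the $S$ above is not surjective, I would modify $S$ by attaching very free rational curves in $X$ (which exist since $d \leq n$) to finitely many fibers of $\pi$ and smoothing the resulting family, to enlarge $H^0(S, N_{f,X}(m))$ while leaving the general fiber of $\pi$ and condition (a) undisturbed. A standard glueing argument from deformation theory then makes $H^1(S, N_{f,X}(m) \otimes I_{C/S})$ small enough to force the restriction map to be surjective. For (c), given (b), the long exact sequence of $0 \to I_{C/S} \to \O_S \to \O_C \to 0$ tensored with $N_{f,X}(m)$ gives $h^0(N_{f,X}(m) \otimes I_{C/S}) = h^0(N_{f,X}(m)) - h^0(N_{f,X}(m)|_C)$ and $h^1(N_{f,X}(m) \otimes I_{C/S}) \leq h^1(N_{f,X}(m))$, and lower bounds on $h^0(N_{f,X}(m))$ coming from (a) together with the nonzero section of $N_{f,X}|_C$ produced by the tangent direction to $\P^1$ combine to yield $\chi(N_{f,X}(m) \otimes I_{C/S}) \geq 0$.

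The hardest step is (b): arranging a modification of $S$ that simultaneously preserves the fibration structure of $\pi$, keeps the general fiber a smooth embedded rational curve, and is compatible with the family of (a), while securing surjective restriction on the general fiber, is the delicate part. The twist $m$ is comparatively small (of order $e$), so naive positivity or Kodaira-type vanishing on the initial $S$ will not suffice, and a careful choice of attachment points for the very free tails and their smoothing is essential.
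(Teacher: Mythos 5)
Your construction of $S$ and your argument for (a) match the paper's: both spread $\phi$ out over a family dominating the space of hypersurfaces (the paper uses an incidence correspondence $I\subset Z\times U$ and the chain $T_{I,(z,[X])}\to T_{Z,z}\to H^0(S,N_{f,\P^n})$), and (a) is exactly the surjectivity of $d\pi_2$. The genuine gap is in (b) and (c). For (b), your proposed fix --- attaching very free tails to finitely many fibers of $\pi$ and smoothing --- cannot work as stated: in a flat family of curves over $\P^1$ all fibers have the same degree, so after smoothing a family in which some fibers have acquired tails, every fiber (including the general one) has degree strictly larger than $e$; you would then be making a statement about $\overline{\mM}_{e'}(X)$ for some $e'>e$, not the one at hand. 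Moreover $S$ is forced on you by the hypothesis and must continue to satisfy (a), i.e., must move with $X$ over the whole parameter space of hypersurfaces, which an ad hoc modification does not automatically do. The paper proves (b) without touching $S$: it uses that the smooth degree-$e$ rational curve $C$ is $(m+1)$-normal and that $I_{C/\P^n}$ is $m$-regular. A section $\alpha$ of $N_{f,X}(m)|_C$ is lifted to $H^0(S,N_{f,\P^n}(m))$ via the surjection $H^0(\P^n,\O_{\P^n}(m+1)^{n+1})\to H^0(C,N_{f,\P^n}(m)|_C)$; the obstruction to descending to $N_{f,X}(m)$ is a section of $I_{C/\P^n}(d+m)$, which by $m$-regularity factors as $\sum\tilde\gamma_i\tilde\eta_i$ with $\deg\tilde\gamma_i=d$ and $\tilde\eta_i$ vanishing on $C$; part (a) converts each $f^*\tilde\gamma_i$ into a section $\bar\mu_i$ of $N_{f,\P^n}$, and the correction $\sum\bar\mu_i f^*\tilde\eta_i$ kills the obstruction while restricting to zero on $C$. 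This interplay between (a) and regularity is the essential point of the proposition and is absent from your proposal.

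For (c), a lower bound on $h^0$ does not give $\chi\geq 0$, since $\chi=h^0-h^1+h^2$ and you have no control on $h^1(S,N_{f,X}(m))$; your inequality $h^1(N_{f,X}(m)\otimes I_{C/S})\leq h^1(N_{f,X}(m))$ is useless without a vanishing statement for the right-hand side. The paper instead proves that $H^1$ and $H^2$ of $N_{f,X}(m)\otimes I_{C/S}$ vanish: since $I_{C/S}=\pi^*\O_{\P^1}(-1)$, part (b) together with $H^1(C,N_{f,X}(m)|_C)=0$ gives $H^1(\P^1,(\pi_*N_{f,X}(m))(-1))=H^1(\P^1,\pi_*N_{f,X}(m))$, and on $\P^1$ twisting down by $\O(-1)$ strictly increases $h^1$ of any summand that has nonzero $h^1$, so both groups vanish; and $R^1\pi_*N_{f,X}(m)=0$ follows from Clemens' theorem that $g^*T_X(1)$ is globally generated for every $g:\P^1\to X$ when $X$ is very general. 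Hence $\chi=h^0\geq 0$. You should replace your sketches of (b) and (c) with arguments along these lines.
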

\begin{proof}

Our assumptions imply that there is an irreducible quasi-projective variety $Z$  and a morphism $\phi: Z \times \P^1 \to \overline{\mathcal M}_{e}(\P^n)$ such that the following hold 
\begin{itemize}
\item For each $z_0 \in Z$, the morphism $\phi_{z_0}:=\phi(z_0,b): \P^1 \to \overline{\mathcal M}_{e}(\P^n)$ is a non-constant morphism whose image intersects the locus of embedded smooth rational curves.
\item  For a very general $X$, there is $z\in Z$ such that the image of $\phi_z$ is contained in $\overline{\mathcal M}_{e}(X)$. 
\end{itemize}
Replacing $Z$ with an open subset we may assume $Z$ is non-singular. 

Let $\P^N$ be the projective space parametrizing hypersurfaces of degree $d$ in $\P^n$ and $U \subset \P^N$ the open subset  parametrizing smooth hypersurfaces. Denote by $I \subset Z \times U$ the incidence correspondence parametrizing pairs 
$(z,[X])$ such that the image of $\phi_z: \P^1 \to \overline{\mathcal M}_{e}(\P^n)$ is contained in $\overline{\mathcal M}_{e}(X)$. Denote by $\pi_1$ and $\pi_2$ the projection maps from $I$ to 
$Z$ and $U$ respectively. By our assumption $\pi_2$ is dominant. Replacing $I$ with an irreducible component which maps dominantly to $U$ under $\pi_2$, we may assume $I$ is irreducible. 
Let $V$ be the pullback of the universal curve $\mathcal C \to \overline{\mathcal M}_e(\P^n)$ to 
$Z \times \P^1$. Denote by $q: V \to \P^n$ the pullback of the universal map $\mathcal C \to \P^n$ to 
$V$ and by $p: V \to Z$ the projection to the first factor.  
Let $d: \tilde{V} \to V$ be a desingularization, and set $\tilde{p} = p \circ d$ and $\tilde{q} = q \circ d$. 
$$
\xymatrix{
\tilde{V} \ar[rdd]_{\tilde{p}}  \ar@/^2pc/[rrr]^{\tilde{q}} \ar[r]^d & V \ar[r] \ar[d] & \mathcal C\ar[d] \ar[r] & \P^n \\
& Z \times \P^1 \ar[r] \ar[d] &  \overline{\mathcal M}_e(\P^n) \\
& Z
}
$$

Let $(z,[X]) \in I$ be a general point. Denote the fiber of $\tilde{p}$ over $z$ by $S$. Since 
$z$ is general in $Z$, by generic smoothness, $S$ is a smooth surface and if $f$ is the restriction of $\tilde{q}$ to $S$, then  $f: S \to X$ maps a general fiber of $S \to \P^1$ isomorphically onto a curve in $X$. 

Denote by $N_{(\tilde{p}, \tilde{q})}$ the normal sheaf of the map $(\tilde{p}, \tilde{q}): \tilde{V} \to Z \times \P^n$. We get a sequence of maps 
$$\rho: \;\;\;\;\; T_{Z,z} \to H^0(S, \tilde{p}^*T_{Z}|_S) \to H^0(S, (\tilde{p},\tilde{q})^*T_{Z \times \P^n}|_S) \to H^0(S, N_{(\tilde{p},\tilde{q})}|_S).$$
Note that $(\tilde{p},\tilde{q})$ is generically finite and $z$ is general, therefore $T_{\tilde{V}}|_S \to  (\tilde{p},\tilde{q})^*T_{Z\times \P^n}|_S$ is injective. So $N_{(\tilde{p},\tilde{q})}|_S$ is isomorphic to $N_{f,\P^n}$:
$$\xymatrix{
& 0 \ar[d] & 0 \ar[d] \\
0 \ar[r] & T_S \ar[r]  \ar[d] & f^*T_{\P^n} \ar[r] \ar[d] & N_{f,\P^n} \ar[r] \ar[d] & 0\\
0 \ar[r] & T_{\tilde{V}}|_S \ar[r] \ar[d] & (\tilde{p},\tilde{q})^*T_{Z\times \P^n}|_S \ar[r]  \ar[d] & N_{(\tilde{p},\tilde{q})}|_S \ar[r] & 0\\
& N_{S/\tilde{V}} \ar[r]^{=}  \ar[d] & \tilde{p}^*T_{Z}|_S \ar[d] \\
 & 0 & 0.
}
$$
There is  a commutative diagram
$$
\xymatrix{
& T_{I, (z,[X])}   \ar[dl]^{d\pi_1} \ar[dr]_{d\pi_2} \\
T_{Z, z}  \ar[d]^{\rho} & & T_{\P^N,[X]}=H^0(X,\O_X(d)) \ar[d]\\
H^0(S,N_{f,\P^n}) \ar[rr] & & H^0(S, f^*\O_X(d)).
}
$$
Since $\pi_2$ is dominant, $d\pi_2$ is surjective, and part (a) follows. 

\bigskip
To prove part (b), we tensor Sequence (\ref{normal}) with $\O_X(m)$ to  get the following short exact sequence 
$$ 0 \to N_{f, X}(m) \to N_{f, \P^n}(m) \to f^*\O_X(d+m) \to 0.$$
Let $C$ be a general fiber of the map $\pi: S \to \P^1$. Restricting the above short exact sequence to $C$, we get a short exact sequence of $\O_C$-modules
$$ 0 \to N_{f, X}(m)|_C \to N_{f, \P^n}(m)|_C \to f^*\O_X(d+m)|_C \to 0.$$
We have the following commutative diagram
{\small{$$
\xymatrix{
H^0(\P^n, \O_{\P^n}(m+1)^{n+1}) \ar[r] \ar[d] &  H^0(\P^n, T_{\P^n}(m)) \ar[d] \ar[r] & 
H^0(S, N_{f, \P^n}(m)) \ar[d] \\
 H^0(C, \O_{\P^n}(m+1)^{n+1}|_C) \ar[r] & H^0(C, T_{\P^n}(m)|_C) \ar[r] &  H^0(C, N_{f, \P^n}(m)|_C).
 }
$$}}
Since $f(C)$ is $(m+1)$-normal, the left vertical map is surjective. Using the Euler sequence and the fact that $H^1(C, \O_C(m))=0$ and $H^1(C, T_S(m)|_C)=0$, we see that the two lower horizontal maps are also surjective. So we have a surjective map  $$H^0(\P^n, \O_{\P^n}(m+1)^{n+1}) \to H^0(C, N_{f, \P^n}(m)|_C).$$
Pick $\alpha \in H^0(C,N_{f,X}(m)|_C)$, and denote the image of $\alpha$ in $H^0(C,N_{f,\P^n}(m)|_C)$ by $\beta$. Let $\tilde{\beta}$ be a lift of $\beta$ to $H^0(\P^n, \O_{\P^n}(m+1)^{n+1})$ under the above map and  $\bar{\beta}$ the image of $\tilde{\beta}$ in $H^0(S, N_{f,\P^n}(m))$. Let $\gamma \in H^0(X, \O_X(m+d))$ be the image of of $\tilde{\beta}$ under the composition
\[ H^0(\P^n, \O_{\P^n}(m+1)^{n+1}) \to H^0(\P^n, T_{\P^n}(m)) \to H^0(X, T_{\P^n}(m)|_X) \to H^0(X, \O_X(m+d)) , \]
where the last map is induced by the sequence
\[ 0 \to T_X \to T_{\P^n}|_X \to \O_X(d) \to 0 .\]
Let $\tilde{\gamma}$ be a preimage of $\gamma$ in $H^0(\P^n, \O_{\P^n}(d+m))$. Then $\bar{\beta}|_C = \beta$ and $f^* \tilde{\gamma}|_C=0$. Since $\tilde{\gamma}|_{f(C)} = 0$, we can view $\tilde{\gamma}$ as an element of $H^0(\P^n,I_{f(C)/\P^n}(d+m))$. Since $I_{f(C)/\P^n}$ is a $m$-regular sheaf, the multiplication map 
$$H^0(\P^n, \O_{\P^n}(d)) \otimes H^0(\P^n, I_{f(C)/\P^n}(m)) \to H^0(\P^n, I_{f(C)/\P^n}(d+m))$$
is surjective, so $\tilde{\gamma}$ can be written as 
$$\tilde{\gamma}= \tilde{\gamma}_1 \tilde{\eta}_1 + \dots +\tilde{\gamma}_k\tilde{\eta}_k$$
where $\tilde{\gamma}_i \in H^0(\P^n,\O_{\P^n}(d))$ and $\tilde{\eta}_i \in H^0(\P^n,I_{f(C)/\P^n}(m))$ for each $i$. So if we view $\tilde{\eta_i}$ as an element of $H^0(\P^n, \O_{\P^n}(m))$ we see that $f^*\tilde{\eta_i}|_C=0$ for each i.  By part (a) the image 
of the map $$f^*: H^0(\P^n, \O_{\P^n}(d)) \to H^0(S, f^*\O_{\P^n}(d))$$ is contained in the image of the map 
$$h: H^0(S, N_{f, \P^n}) \to H^0(S, f^*\O_{\P^n}(d)).$$
So $f^*\tilde{\gamma_i} = h(\bar{\mu}_i)$ for some $\bar{\mu}_i \in H^0(S,N_{f,\P^n})$. Let 
$$\bar{\mu} = \bar{\mu}_1f^*\tilde{\eta}_1+\dots + \bar{\mu}_k f^*\tilde{\eta}_k \in H^0(S,N_{f,\P^n}(m)).$$ 
Then $\bar{\mu}|_C=0$ and since $h(\bar{\mu}-\bar{\beta}) = 0$, $\bar{\mu}-\bar{\beta}$ is the image of a section of $N_{f,X}(m)$ whose restriction to $C$ is $\alpha$.

To prove (c), note that  by the Leray spectral sequence, to show the Euler characteristic is non-negative, it is enough to show that for a general fiber $C$ of $\pi$, (1) $H^1(\P^1,  \pi_*(N_{f,X}(m) \otimes I_{C/S})) =0$ and (2) $R^1\pi_*(N_{f,X}(m)\otimes I_{C/S})=0.$

By part (b), $H^0(S, N_{f,X}(m)) \to 
H^0(C, N_{f,X}(m)|_C)$ is surjective, and $H^1(C, N_{f,X}(m)|_C)=0$, so $$H^1(S, N_{f,X}(m)\otimes I_{C/S}) = H^1(S, N_{f,X}(m)).$$ Thus 
$$H^1(\P^1, \pi_*N_{f,X}(m) \otimes \O_{\P^1}(-1))=H^1(\P^1,  \pi_*N_{f,X}(m)),$$ so $H^1(\P^1,  \pi_*N_{f,X}(m) \otimes \O_{\P^1}(-1)) =0$.  This shows (1).

To show (2), we note that since $X$ is very general, for any morphism $g: \P^1 \to X$, $g^*T_X(1)$ is globally generated by the main result of \cite{clemens}. (See also 
\cite{ein} and \cite{voisin}.) So the restriction of $f^*T_X(m)$ to 
every  irreducible component of every fibre of $\pi$ is globally generated. So $R^1\pi_*T_{X}(m) =0$ and 
$R^1\pi_*N_{f,X}(m) =0$.  
\end{proof}

\begin{proof}[Proof of Theorem \ref{main}] Assume to the contrary that for a very general $X$, there is a non-constant map $\P^1 \to \overline{\mathcal M}_e(X)$ whose image intersects the locus of 
embedded smooth rational curves in $X$.  Let $m=e-1$ if $e\geq 3$ and $m=e$ if $e=1$ or $e=2$. By Proposition \ref{euler-pos}, there is a surface $S$ and map $f: S \to X$ and $\pi:S \to \P^1$ such that $\chi(N_{f,X}(m) \otimes I_{C/S}) \geq 0$. Blowing down, we may assume that there is no $(-1)$-curve  in any fiber of $\pi$ which is contracted by $f$. This contradicts Corollary \ref{euler-neg}.
\end{proof}

A modification of the proof of Proposition \ref{euler-pos} shows that the statement of Theorem \ref{main} remains true for morphisms $\P^1 \to 
\overline{\mathcal M}_e(X)$ whose image intersects the locus of embedded reducible nodal  rational curves in $X$. We sketch the proof here. Fix $e \geq 1$, let $m=e-1$ if $e \geq 3$ and $m=e$ if $e=1$ or 2,  and assume that for a very general hypersurface $X$ of degree $d$, there is a non-consent map $\phi: \P^1 \to \overline{\mathcal M}_e(X)$ whose image intersects the locus of embedded nodal  rational curves in $X$. Then there are positive integers $r$ and $s$ such that $rs \leq e$ and 
for a very general hypersurface $X$ of degree $d$, there is a smooth curve $B$, a degree $r$ morphism $B \to \P^1$, and a morphism $B \to \overline{\mathcal M}_s(X)$ such that 
the image intersects the locus of embedded smooth rational curves of degree $s$ in $X$.

This implies that there are smooth, irreducible, and quasi-projective varieties $P$ and $Z$ ($P$ is just a point in the case of Proposition \ref{euler-pos}), 
a proper morphism $p_1: W \to P$ whose fibers are smooth projective  curves which are degree $r$ covers of $\P^1$ ($r=1$ and $W$ is the projective line in the case of Proposition \ref{euler-pos}), and morphisms $p_2: Z \to P$ and 
$
\phi: Z \times_P W  \to  \overline{\mathcal M}_s(\P^n) 
$ 
with the following property: for every $z \in Z$, 
$$\phi_{z} : p_1^{-1}(p_2(z)) \to \overline{\mathcal M}_s(\P^n)$$
is a morphism which intersects the locus of embedded smooth rational curves, and for a very general  $X$, there is $z$ such that $\phi_{z}$ parametrizes stable maps which are mapped to $X$. We proceed now as in the proof of Proposition \ref{euler-pos} and let $I \subset Z \times U$ be a dominating irreducible component of the the incidence correspondence where $U$ is the locus of smooth hypersurfaces of degree $d$ in $\P^n$. We also let $V$, $S$, and $z$ be as before.

We conclude that if $B=p_1^{-1}(p_2(z))$, then there is a  morphsim of degree $r$ $g: B \to \P^1$, and there are morphisms $f:S \to X$ and $\pi: S \to B$ such that $f$ 
maps a general fiber $C$ of $\pi$ isomorphically onto a smooth rational curve of degree $s$ on $X$, $rs \leq e$ , and 
\begin{enumerate}
\item[(a)] The image 
of the map $H^0(X, \O_{\P^n}(d)) \to H^0(S, f^*\O_{\P^n}(d))$ is contained in the image of the map 
$$H^0(S, N_{f, \P^n}(d)) \to H^0(S, f^*\O_{\P^n}(d)).$$
\item[(b)] The map $H^0(S,N_{f,X}(m)) \to H^0(D,N_{f,X}(m)|_D)$ is surjective where $D$ is a general fiber of $g \circ \pi: S \to \P^1$. \\
\item[(c)] The Euler characteristic $\chi(N_{f,X}(m)\otimes I_{D/S}) \geq 0$.
\end{enumerate} 
Part $(b)$ follows from the proof of Proposition \ref{euler-pos} since the image of $D$ is $e$-regular in $X$. Part $(c)$ follows from a similar argument as in Proposition 
\ref{euler-pos} and the fact that 
if $F$ is a sheaf on $B$ with $H^1(B, F \otimes I_{g^{-1}(p)}) = H^1(B,F)$ for a general $p \in \P^1$, then $H^1(B,F)=0$. Applying Corollary \ref{euler-neg} to $S$ gives the desired result. 

\bigskip
\section{Differential forms on  Kontsevich moduli space}
Let $X$ be a smooth hypersurface of degree $d$ in $\P_{\C}^n$. 
Let $\overline{\mathcal M}_e(X)$ be the Kontsevich moduli space of stable maps of degree $e$ from curves of 
genus zero to $X$, and let $\overline{M}_e(X)$ be the corresponding coarse moduli scheme. There is a universal curve $\pi: \mathcal C \to \overline{\mathcal M}_e(X)$ and an evaluation map 
$ev: \mathcal C \to X$. 


In this section, we use the construction of de Jong and Starr \cite{dejong-starr} to show that 
there are non-zero differential forms on any desingularization of $\overline{M}_e(X)$ when $(n+1)/2 \leq d \leq n-3$. By Corollary 4.3 of \cite{dejong-starr}, for every $i,j \geq 1$, there is a $\C$-linear map 
$$\alpha_{i,j}: H^i(X, \Omega_X^j) \to H^{i-1}(\overline{\mathcal M}_e(X), \Omega_{\overline{\mathcal M}_e(X)}^{j-1}).$$
We consider the map $\alpha_{1,n-2}$, so the above map gives $(n-3)$-forms on the Kontsevich moduli stack. Let $\overline{N}_e(X)$ be a desingularization of $\overline{M}_e(X)$. By \cite[Proposition 3.6]{dejong-starr}, for every $j \geq 0$, there is a linear map 
$$H^0(\overline{\mathcal M}_e(X), \Omega_{\overline{\mathcal M}_e(X)}^{j}) \to  H^{0}(\overline{N}_e(X), \Omega_{\overline{N}_e(X)}^{j}).$$
Composing this with $\alpha_{1,n-2}$, we get a map from $H^1(X, \Omega_X^{n-2})$ to the space of $(n-3)$-forms on $\overline{N}_e(X)$. 

\begin{proposition}\label{forms}
Assume $\frac{n+1}{2} \leq d \leq n-3$ and $X$ is a smooth hypersurface of degree 
$d$ in $\P^n$. If $\overline{N}_e(X)$ is a desingularization of $\overline{M}_e(X)$, then the map $H^1(X, \w^{n-2}\Omega_X) \to H^0(\overline{N}_e(X), \w^{n-3} \Omega_{\overline{N}_e(X)})$ is non-zero for every $e$. 
\end{proposition}

\begin{proof}
Fix $e$ and $X$, and set $\mathcal M = \overline{\mathcal M}_{e}(X)$ and $N= \overline{N}_e(X)$ for simplicity. By \cite[Corollaries 4.2 and 4.3]{dejong-starr} the map $\alpha_{1,n-2}$ factors through the maps 
\begin{equation}\label{form}
H^1(X, \Omega_X^{n-2}) \to H^1(\mathcal C, \Omega_{\mathcal C}^{n-2}) \to H^1(\mathcal C, \pi^*\Omega_{\mathcal M}^{n-3} \otimes \omega_{\pi}) 
\to H^0(\mathcal M, \Omega_{\mathcal M}^{n-3})
\end{equation}
where:

\begin{itemize} 
\item the first map comes from the map $ev^* \Omega_X \to \Omega_{\mathcal C},$
\item the 
second map comes from a map of $\O_{\mathcal C}$-modules 
$$ \Omega_{\mathcal C}^{n-2} \to \pi^*\Omega_{\mathcal M}^{n-3} \otimes \omega_{\pi}$$
which fits into the following short exact sequence over the locus $U$ of embedded smooth curves
$$\pi^*\Omega_{\mathcal M}^{n-2}|_U \to \Omega_{\mathcal C}^{n-2}|_U \to \pi^*\Omega_{\mathcal M}^{n-3} \otimes \omega_{\pi}|_U \to 0,$$
\item and  the last map comes from the Leray spectral sequence and the fact that $R^1\pi_*\omega_{\pi} =\O_{\mathcal M}$.
\end{itemize}
Since $d <n$, there is an irreducible component of $\mathcal M$ whose general point parametrizes an embedded smooth free rational curve of degree $e$ on $X$. Let $C$ be a such a curve.  We denote the stable map corresponding to the isomorphism from $\P^1$ onto $C$ by $[C] \in \mathcal M$, and 
identify the fiber of $\pi: \mathcal C \to \mathcal M$ over $[C]$ with $C$. Since $C$ is free, $\mathcal M$ is smooth at $[C]$ and
$T_{\mathcal M}|_{[C]}= H^0(C,N_{C/X})$. Restricting Sequence (\ref{form}) to $C$, we get the following diagram

$$
\xymatrix{
H^1(X, \Omega_X^{n-2}) \ar[rrr]^{\alpha_{1,n-2}} \ar[d] & & &H^0(\mathcal M, \Omega_{\mathcal M}^{n-3})\ar[d] 
\\
H^1(C, \Omega_X^{n-2}|_C) \ar[r] & H^1(C, \Omega_{\mathcal C}^{n-2}|_C) \ar[r] & H^1(C, \pi^*\Omega_{\mathcal M}^{n-3} \otimes \omega_{\pi}|_C) 
\ar[r]^{\;\;\;\;\;\;\;\;\;\;\;\;\;\;\;\;\; \simeq} &  \Omega_{\mathcal M}^{n-3}|_{[C]} 
}
$$
and we have $\pi^*\Omega_{\mathcal M}^{n-3} \otimes \omega_{\pi}|_C= \w^{n-3} I_{C/\mathcal C} \otimes \Omega_C$. In order to show the statement, it suffices 
to prove that in the above diagram the composition of the maps 
$$H^1(X, \Omega_X^{n-2}) \to  H^1(C, \Omega_X^{n-2}|_C) \to H^1(C, \Omega_{\mathcal C}^{n-2}|_C) $$
$$ \to H^1(C, \pi^*\Omega_{\mathcal M}^{n-3} \otimes \omega_{\pi}|_C) = H^1(C,  \w^{n-3} I_{C/\mathcal C} \otimes \Omega_C)$$
is non-zero. Since $\Omega^{n-3}_{\mathcal M}|_{[C]} = \Omega^{n-3}_{N}|_{[C]}$, this would show the assertion of the theroem. 
From the short exact sequence 
$$0 \to I_{C/X} \otimes \O_C \to \Omega_X|_C \to \Omega_C \to 0,$$ we get the following short exact sequence 
$$0 \to  \w^{n-2} I_{C/X} \otimes \O_C \to \Omega^{n-2}_X|_C \to \w^{n-3} I_{C/X} \otimes \Omega_C \to 0.$$
Similarly, there is an exact sequence 
$$0 \to  \w^{n-2} I_{C/\mathcal C} \otimes \O_C  \to \Omega^{n-2}_{\mathcal C}|_C \to \w^{n-3} I_{C/\mathcal C} \otimes \Omega_C \to 0,$$
and a commutative diagram 
$$\xymatrix{
H^1(C, \Omega^{n-2}_X|_C)  \ar[r] \ar[d] & H^1(C, \w^{n-3} I_{C/X} \otimes \Omega_C) \ar[d] \\
H^1(C, \Omega^{n-2}_{\mathcal C}|_C )  \ar[r] & H^1(C, \w^{n-3} I_{C/\mathcal C} \otimes \Omega_C).
}
$$
So to show the assertion, we show that the composition of the maps 

\begin{equation}\label{h1}
H^1(X, \Omega_X^{n-2}) \to  H^1(C, \Omega_X^{n-2}|_C) \to H^1(C, \w^{n-3} I_{C/X} \otimes \Omega_C) \to  H^1(C,  \w^{n-3} I_{C/\mathcal C} \otimes \Omega_C)
\end{equation}
is non-zero. Note that $$\w^{n-3} I_{C/\mathcal C} \otimes \O_C= \Omega^{n-3}_{\mathcal M}|_{[C]} \otimes \O_C= \w^{n-3} T_{\mathcal M}^\vee |_{[C]} \otimes \O_C= \w^{n-3}H^0(C, N_{C/X})^\vee \otimes \O_C.$$ 
So by Serre duality, the last map in Sequence (\ref{h1}) is the dual of the map 
$$\w^{n-3} H^0(C, N_{C/X}) \to H^0(C, \w^{n-3} N_{C/X}).$$ Since $C$ is free, the above map is surjective, so the last map in Sequence (\ref{h1})
is injective. 
Hence it is enough to  show that under our assumptions, the composition of the maps  
$$H^1(X, \Omega^{n-2}_X) \to H^1(C, \Omega_X^{n-2}|_C) \to H^1(C, \w^{n-3} I_{C/X} \otimes \Omega_C)$$
is non-zero. 

To prove this we consider the short exact sequence 
 $$ 0 \to \O_X(-d) \to \Omega_{\P^n}|_X \to \Omega_X \to 0$$ which gives the following short exact sequence 
\begin{equation}\label{first}
0 \to \Omega^{n-2}_X \to \Omega^{n-1}_{\P^n}|_X \otimes \O_X(d) \to 
\O_X(2d-n-1) \to 0.
\end{equation}
There is also a short exact sequence on $C$
\begin{equation}\label{second}
 0 \to \w^{n-3} I_{C/X} \otimes \Omega_C \to \w^{n-2} I_{C/\P^n} 
\otimes \Omega_C \otimes \O_C(d) \to \O_C(2d-n-1) \to 0,
\end{equation}
and Sequence (\ref{first}) maps to Sequence (\ref{second}). Taking the long exact sequence of cohomology we get a commutative diagram 
$$
\xymatrix{
H^0(X, \O_X(2d-n-1)) \ar[r] \ar[d] & H^1(X, \Omega^{n-2}_X) \ar[d] \\
H^0(C, \O_C(2d-n-1)) \ar[r] & H^1(C, \w^{n-3} I_{C/X} \otimes \Omega_C)
}
$$
and since $2d-n-1 \geq 0$, the left vertical map is non-zero. To show the desired result, we show that the bottom map is injective.  This follows if we 
show $H^0(C,  \w^{n-2} I_{C/\P^n} \otimes \Omega_C \otimes \O_C(d)) =0$. Let 
$$ N_{C/\P^n}= \O_{\P^1}(a_1) \oplus \dots \oplus \O_{\P^1}(a_{n-1}).$$
We have $\w^{n-2} I_{C/\P^n} 
\otimes \Omega_C \otimes \O_C(d)= N_{C/\P^n} \otimes \O_{\P^1}(e(d-n-1))$, so we need to show $a_i < e(n+1-d)$ for each $i$. Since $\sum_i a_i = e(n+1)-2$ and each $a_i$ is at least $e$, we see that $a_i \leq 3e-2$ for all $i$. Thus, $a_i < e(n+1-d)$ provided $d \leq n-2$. This proves the result.

\end{proof}


\section{Gonality and the space swept out by lines}\label{linesSection}

Our goal is to prove the following:
\begin{theorem}
\label{thm-kgonalCurvesInY}
Let $X \subset \P^n$ be a very general degree $d \geq n$ hypersurface. Let $Y \subset X$ be the locus swept out by lines. Then $Y$ contains no $k$-gonal curves other than lines if $k \leq \frac{d^2+3d+6-6n}{2(n-1)}$. In particular, $Y$ contains no rational curves other than lines.
\end{theorem}

We begin with a description of the Fano scheme $F_1(X)$.

\begin{proposition}
Let $X \subset \P^n$ be a general hypersurface. If $2n-d-3 \geq 0$, then the Fano scheme $F_1(X)$ of lines on $X$ is smooth of dimension $2n-d-3$. If $2n-d-3 \geq 0$, then the canonical bundle of $F_1(X)$ is $(\frac{d(d+1)}{2} - n - 1) \sigma_1$, where $\sigma_1$ is the restriction of the divisor on $\pG(1,n)$ of lines meeting a fixed codimension $2$ space.
\end{proposition}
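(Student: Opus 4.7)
The plan is to realize $F_1(X)$ as the zero locus of a canonical section of a rank $d+1$ vector bundle on the Grassmannian $\G := \pG(1,n)$, and then apply adjunction. Let $\mathcal S$ denote the tautological rank $2$ subbundle on $\G$ (so $\mathcal S|_{[\ell]}$ is the $2$-dimensional subspace of $\C^{n+1}$ corresponding to $\ell$), and set $E := \mathrm{Sym}^d \mathcal S^*$. The fiber of $E$ over $[\ell]$ is $H^0(\ell,\O_\ell(d))$, and a defining equation $F \in H^0(\P^n,\O(d))$ of $X$ restricts fiberwise to give a section $s_F \in H^0(\G,E)$ whose vanishing locus is precisely $F_1(X)$.

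First I would prove smoothness and dimension. Consider the incidence correspondence $I = \{([\ell],[X]) : \ell \subset X\} \subset \G \times \P^N$, where $\P^N$ parametrizes degree $d$ hypersurfaces. Projection to $\G$ realizes $I$ as a projective bundle of rank $N-d-1$ (the hypersurfaces containing a fixed line form a codimension $d+1$ linear subspace of $\P^N$), so $I$ is smooth of dimension $2(n-1)+N-(d+1)$. By generic smoothness in characteristic zero applied to the projection $I \to \P^N$, for $[X]$ general the fiber $F_1(X)$ is smooth of the expected dimension $2n-d-3$, and in particular $s_F$ is a regular section, so the normal bundle of $F_1(X)$ in $\G$ is $E|_{F_1(X)}$.

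Next I would run the Chern class computation. Writing $\alpha,\beta$ for the Chern roots of $\mathcal S^*$, so that $\alpha+\beta = c_1(\mathcal S^*) = \sigma_1$, the Chern roots of $\mathrm{Sym}^d \mathcal S^*$ are $i\alpha+(d-i)\beta$ for $i=0,\dots,d$, giving
\[
c_1(E) \;=\; \sum_{i=0}^{d}\bigl(i\alpha+(d-i)\beta\bigr) \;=\; \tfrac{d(d+1)}{2}(\alpha+\beta) \;=\; \tfrac{d(d+1)}{2}\sigma_1.
\]
Combined with the standard formula $K_\G = -(n+1)\sigma_1$ for the Grassmannian of lines in $\P^n$ (viewed as $\G(2,n+1)$), the adjunction formula gives
\[
K_{F_1(X)} \;=\; \bigl(K_\G + c_1(E)\bigr)\big|_{F_1(X)} \;=\; \left(\tfrac{d(d+1)}{2}-n-1\right)\sigma_1\big|_{F_1(X)},
\]
as desired.

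The only step that requires any care is the smoothness claim, since one must verify that $s_F$ is a regular section for general $F$; I expect this is the main (though still standard) obstacle, and it follows cleanly from generic smoothness applied to the incidence variety $I$, which is manifestly smooth because of its projective bundle structure over $\G$. Once smoothness is in hand, the dimension count and the canonical class computation are purely formal manipulations with Chern classes on the Grassmannian.
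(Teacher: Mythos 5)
Your argument is correct and is the standard one: the paper states this proposition without proof, treating it as classical, so there is nothing to compare against. Your zero-locus description $F_1(X)=V(s_F)$ with $s_F\in H^0(\G,\mathrm{Sym}^d\mathcal S^*)$, the generic-smoothness argument on the incidence correspondence, and the adjunction/Chern-class computation $c_1(\mathrm{Sym}^d\mathcal S^*)=\frac{d(d+1)}{2}\sigma_1$, $K_{\G}=-(n+1)\sigma_1$ are all accurate; the only point left tacit is that $I\to\P^N$ is dominant (equivalently $F_1(X)\neq\emptyset$) when $2n-d-3\geq 0$, which is classical and in any case only needed to make the dimension statement non-vacuous.
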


Now we recall the following definitions and results of Bastianelli, De Poi, Ein, Lazarsfeld and Ullery \cite{ELU}. A divisor $D$ is Birationally Very Ample to order $k$ ($BVA_k$) if $D = E + kA$, where $E$ is effective and $A$ is very ample. 

\begin{theorem}[Theorem 1.10 from \cite{ELU}]
If a smooth variety $Z$ satisfies $K_Z$ is $BVA_k$, then $Z$ is not swept out by $k+1$-gonal curves.
\end{theorem}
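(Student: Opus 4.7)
The plan is to assume for contradiction that $Z$ is swept out by $(k+1)$-gonal curves and derive a contradiction with the positivity expressed by $K_Z$ being $BVA_k$. So I suppose there is a family $\pi: \mathcal{C} \to T$ of smooth projective curves together with a dominant evaluation map $e: \mathcal{C} \to Z$ and a relatively degree $k+1$ rational map $\mathcal{C} \dashrightarrow \P^1_T$ over $T$. After possibly base-changing and resolving, I work with a general member $C = \pi^{-1}(t_0)$, a smooth projective $(k+1)$-gonal curve with gonality pencil $|g^1_{k+1}|$. Write $K_Z = E + kA$ with $E$ effective and $A$ very ample; since the family dominates $Z$, a general $C$ is not contained in the support of $E$, so $E|_C$ is effective.

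The strategy is to produce, on a general such curve $C$, a configuration of $k+1$ points that violates a Cayley-Bacharach-type constraint imposed by the very ampleness of $A$. For a general $p \in Z$ and a general $C$ through $p$, let $D_p = p + q_1 + \cdots + q_k$ denote the unique divisor of the pencil $|g^1_{k+1}|$ containing $p$. The central point is that $D_p$ moves in a base-point-free pencil on $C$, so the Cayley-Bacharach property applies to its fibers: any divisor in the complete linear system of degree $k+1$ on $C$ that is adjoint to the pencil (in the sense of Eisenbud-Green-Harris) and vanishes on $k$ of the $k+1$ points of $D_p$ must vanish on all $k+1$.

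Now I exploit the decomposition $K_Z = E + kA$. A section $s \in H^0(Z, kA)$ vanishing at $q_1, \ldots, q_k$ restricts to $s|_C \in H^0(C, kA|_C)$ vanishing at those points; multiplying by a section $\sigma_E \in H^0(Z, E)$ which is not identically zero on $C$ yields a section of $K_Z|_C$ vanishing at $q_1, \ldots, q_k$. To upgrade this to a section of $K_C$, I pass to a general one-parameter subfamily $S \to B$ of the covering family, so that $S$ is a smooth projective surface fibered in $(k+1)$-gonal curves, with $C$ a general fiber. Adjunction gives $K_C = (K_S + C)|_C = K_S|_C$ (since $C^2 = 0$), and the pullback of $K_Z$ to $S$ differs from $K_S$ by a correction supported on the ramification of $S \to Z$, which is controlled because the evaluation map is dominant. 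The upshot is that the section constructed above descends to a section of $K_C$ vanishing at $q_1, \ldots, q_k$. The Cayley-Bacharach property for the pencil $|g^1_{k+1}|$ then forces this section to vanish also at $p$. Translating back to $Z$, every section of $|kA|$ vanishing at the $k$ points $q_1, \ldots, q_k$ must vanish at $p$, contradicting the separation property of the very ample system $|A|$ at general configurations of $k+1$ points.

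The main obstacle is the adjunction-and-Cayley-Bacharach step: cleanly passing between $K_Z|_C$ and $K_C$ and making the vanishing conclusion precise requires careful bookkeeping of the normal bundle $N_{C/Z}$ and of the surface $S$, rather than working with $C$ in isolation. I would handle this by choosing the one-parameter subfamily generically, replacing $S$ by a smooth resolution, and invoking the formalism of \cite{ELU} in which the Cayley-Bacharach identity is formulated intrinsically on the curve fibers via the push-forward $\pi_* \omega_{S/B}$. A secondary technical point is ensuring that the $k$ points $q_1, \ldots, q_k$ actually impose independent conditions on a suitable subsystem of $|A|$ — true for generic $p$ by uniform position combined with very-ampleness — so that the final contradiction genuinely bites.
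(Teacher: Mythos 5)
The paper does not prove this statement; it is imported verbatim from \cite{ELU}, so there is no internal proof to compare against. Measured against the actual argument of Ein--Lazarsfeld--Ullery, your overall strategy is the right one: the Cayley--Bacharach property of the fibers of the gonality pencil with respect to $|K_C|$, combined with the decomposition $K_Z = E + kA$ and the fact that $|A|$ very ample lets one find a divisor in $|kA|$ through $q_1,\dots,q_k$ missing $p$, is exactly how the theorem is proved.

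The genuine gap is in your passage from $K_Z|_C$ to $K_C$. You cut down to a one-parameter subfamily $S \to B$ and assert that $K_S$ differs from the pullback of $K_Z$ ``by a correction supported on the ramification of $S \to Z$, which is controlled because the evaluation map is dominant.'' But once you restrict to a one-parameter family, the evaluation map $S \to Z$ is no longer dominant when $\dim Z > 2$: its image is a surface $W \subset Z$, and the discrepancy between $K_S|_C$ and $(K_Z)|_C$ then involves $\det N_{W/Z}$, which carries no positivity and can destroy the effectivity you need. The correct move is to keep the full covering family: shrink the base $T$ so that $e\colon \mathcal C \to Z$ is generically finite \emph{and dominant}; then $K_{\mathcal C} = e^*K_Z + R$ with $R$ an effective ramification divisor, and since a general $C$ is a fiber of $\mathcal C \to T$ its normal bundle in $\mathcal C$ is trivial, so $K_{\mathcal C}|_C = K_C$ and multiplication by the equation of $R|_C$ gives an injection $H^0(C, K_Z|_C) \hookrightarrow H^0(C, K_C)$. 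Equivalently, for a general member of a covering family $N_{C/Z}$ is generically globally generated, so $\det N_{C/Z}$ has a section nonvanishing at the general point $p$, and adjunction $K_C = K_Z|_C \otimes \det N_{C/Z}$ does the job. A secondary point you should make explicit: the contradiction requires the final section of $K_C$ to vanish at $q_1,\dots,q_k$ but \emph{not} at $p$, so you must track nonvanishing at $p$ through every factor you multiply by --- the section of $kA$, the section of $E$, and the section of $\det N_{C/Z}$ (or of $R|_C$) --- which holds for general $p$ and general $C$ through $p$, but is the whole point of the argument and cannot be left implicit.
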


\begin{corollary}
\label{cor-BVAFano}
If $X$ in $\P^n$ is a general degree $d$ hypersurface with $n \leq \frac{d(d+1)}{2} - k$, then $F_1(X)$ is not swept out by $k$-gonal curves.
\end{corollary}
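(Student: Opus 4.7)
The plan is to combine the Proposition's formula for $K_{F_1(X)}$ with the ELU criterion in a direct arithmetic manner. To apply the cited Theorem 1.10 of \cite{ELU} and conclude that $F_1(X)$ is not swept out by $k$-gonal curves, I need to exhibit $K_{F_1(X)}$ as a $BVA_{k-1}$ divisor, i.e., write it as $E + (k-1)A$ with $E$ effective and $A$ very ample.

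The natural choice is $A = \sigma_1$. First I would verify that $\sigma_1$ is very ample on $F_1(X)$: it is the pullback to $F_1(X)$ of the Plücker hyperplane class on $\pG(1,n)$, which is very ample on the Grassmannian, and the restriction of a very ample line bundle under the closed immersion $F_1(X) \hookrightarrow \pG(1,n)$ remains very ample. Because $X$ is general, the Proposition guarantees that $F_1(X)$ is smooth, so the smoothness hypothesis of Theorem 1.10 is in force.

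Using $K_{F_1(X)} = \bigl(\tfrac{d(d+1)}{2}-n-1\bigr)\sigma_1$ from the Proposition, the hypothesis $n \leq \tfrac{d(d+1)}{2}-k$ rearranges to $\tfrac{d(d+1)}{2}-n-1 \geq k-1$. Therefore I would split
\[ K_{F_1(X)} = \left(\tfrac{d(d+1)}{2}-n-k\right)\sigma_1 \;+\; (k-1)\,\sigma_1, \]
where the first summand is a non-negative multiple of the very ample class $\sigma_1$ and is therefore represented by an effective divisor. This exhibits $K_{F_1(X)}$ as $BVA_{k-1}$, and Theorem 1.10 then yields that $F_1(X)$ is not swept out by $k$-gonal curves.

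There is no real obstacle here once the Proposition is in hand; the corollary is essentially bookkeeping on the single coefficient $\tfrac{d(d+1)}{2}-n-1$. The only mildly subtle verification is that the restriction of the Plücker class to $F_1(X)$ is genuinely very ample (not merely ample), but this is standard from the closed immersion into the Grassmannian.
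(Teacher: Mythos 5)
Your proof is correct and is exactly the route the paper intends (the corollary is stated without proof precisely because it follows by combining the formula $K_{F_1(X)} = \bigl(\tfrac{d(d+1)}{2}-n-1\bigr)\sigma_1$ with Theorem 1.10 of \cite{ELU} via the decomposition you write down). Your verification that $\sigma_1$ is very ample as the restriction of the Pl\"ucker class, and the index shift from ``$BVA_k$ implies no $(k+1)$-gonal sweep'' to needing $BVA_{k-1}$, are both handled correctly.
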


We need a few basic results about the space of degree $d$ hypersurfaces containing a fixed variety.

\begin{lemma}
\label{lem-pointsConditions}
Any set of $k$ distinct points in $\P^n$ with $k \leq d+1$ imposes $k$ conditions on the space of hypersurfaces of degree $d$.
\end{lemma}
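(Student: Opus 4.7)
The plan is to prove surjectivity of the evaluation map
\[ \mathrm{ev}: H^0(\P^n,\O_{\P^n}(d)) \to \bigoplus_{i=1}^k \C_{p_i}, \]
which is equivalent to the claim that the $k$ points impose $k$ independent conditions. To get surjectivity, it is enough to exhibit, for each index $i$, a degree $d$ form $F_i$ vanishing at every $p_j$ with $j \neq i$ but not vanishing at $p_i$; then the $F_i$ map to a scalar multiple of each standard basis vector in turn.

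To construct $F_i$, I would proceed pointwise: for each $j \neq i$, since $p_i \neq p_j$, pick a linear form $\ell_{ij}$ on $\P^n$ with $\ell_{ij}(p_j)=0$ and $\ell_{ij}(p_i)\neq 0$ (such a hyperplane through $p_j$ and avoiding $p_i$ exists in any $\P^n$ with $n\geq 1$). Also pick a linear form $\ell_0$ nonvanishing at $p_i$. Then set
\[ F_i \;=\; \ell_0^{\,d-k+1}\,\prod_{j\neq i}\ell_{ij}. \]
This is a product of $(d-k+1)+(k-1)=d$ linear forms, hence a degree $d$ form, it vanishes at each $p_j$ with $j\neq i$ because the corresponding factor $\ell_{ij}$ does, and it is nonzero at $p_i$ because each chosen factor is nonzero there.

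The one place where the hypothesis enters, and the only potential obstacle, is ensuring the exponent $d-k+1$ of $\ell_0$ is nonnegative, i.e.\ that $k-1\leq d$; this is exactly the assumption $k\leq d+1$. With this in hand the construction produces the required $k$ forms, so $\mathrm{ev}$ is surjective and the lemma follows. No induction or cohomological machinery is needed; the argument is a direct elementary construction of separating sections.
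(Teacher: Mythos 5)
Your proof is correct, and it takes a genuinely different (and more elementary) route than the paper's. The paper argues that by a degeneration one may assume the $k$ points lie on a line $\ell$, and then uses surjectivity of the restriction map $H^0(\P^n,\O_{\P^n}(d)) \to H^0(\ell,\O_\ell(d))$ (linear normality of $\ell$) together with the fact that $k \le d+1$ points on $\P^1$ impose independent conditions on $H^0(\O_{\P^1}(d))$. You instead exhibit explicit separating sections $F_i$, each a product of $d$ linear forms, which directly shows the evaluation map $H^0(\P^n,\O_{\P^n}(d)) \to \bigoplus_i \C_{p_i}$ is surjective; that surjectivity is indeed equivalent to the points imposing exactly $k$ conditions. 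Your construction has a concrete advantage: it applies verbatim to an arbitrary configuration, whereas the semicontinuity behind a degeneration argument only propagates independence of conditions from a special (collinear) configuration to \emph{nearby} ones, so the paper's reduction requires some care to yield the statement for every set of points rather than a general one. What the paper's approach buys is the isolation of the one-variable computation on $\P^1$ as the essential input; this is the same numerical mechanism that makes your exponent $d-k+1$ nonnegative, and both proofs use the hypothesis $k \le d+1$ in exactly that one place.
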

\begin{proof}
First, by a degeneration argument, it suffices to prove the result for $k$ points that lie along a line $\ell$. Then, we can consider the map $\alpha: H^0(\P^n, \O_{\P^n}(d)) \to H^0(\O_{\P^1}(d))$. Since $\ell$ is normal, we see that $\alpha$ is a surjective linear map. Hence, if we fix $k$ points on $\ell$ with $k \leq d+1$, then the space of degree $d$ hypersurfaces in $\P^n$ containing those $k$ points is the preimage under $\alpha$ of all sections of $\O_{\ell}(d)$ that vanish on those $k$ points. The result follows.
\end{proof}

We also need the following standard lemma, described in \cite{riedl-yang1}.
\begin{lemma}
\label{lem-subvarConditions}
Let $Z$ be a variety of dimension $k$. Then it is at least $\binom{d+k}{k}$ conditions for a hypersurface of degree $d$ to contain $Z$.
\end{lemma}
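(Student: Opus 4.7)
The plan is to bound the number of conditions from below by producing an explicit $\binom{d+k}{k}$-dimensional subspace of $H^0(\mathbb{P}^n, \mathcal{O}(d))$ that restricts injectively to $Z$. Indeed, if the restriction map $H^0(\mathbb{P}^n, \mathcal{O}(d)) \to H^0(Z, \mathcal{O}_Z(d))$ has image of dimension at least $N$, then its kernel---the space of degree $d$ forms vanishing on $Z$---has codimension at least $N$, which is exactly the statement that $Z$ imposes at least $N$ conditions on degree $d$ hypersurfaces.

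To manufacture such a subspace, I will use a general linear projection. Pick $k+1$ general linear forms $L_0, \ldots, L_k$ on $\mathbb{P}^n$ and let $\pi : \mathbb{P}^n \dashrightarrow \mathbb{P}^k$ be the associated projection, whose center is the general $(n-k-1)$-plane cut out by these forms. Since $\dim Z = k$, a standard dimension count shows that for generic $L_i$ this center misses $Z$, so $\pi$ restricts to an actual morphism $\pi|_Z : Z \to \mathbb{P}^k$. Because $\dim Z = \dim \mathbb{P}^k$ and the $L_i$ are generic, this morphism is dominant (and generically finite).

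Now let $V \subset H^0(\mathbb{P}^n, \mathcal{O}(d))$ be the subspace of forms of the shape $F(L_0, \ldots, L_k)$ with $F$ a degree $d$ form in $k+1$ variables; after changing coordinates so that $L_i = x_i$, the space $V$ is identified with $H^0(\mathbb{P}^k, \mathcal{O}(d))$ and so has dimension $\binom{d+k}{k}$. For any $G = F(L_0, \ldots, L_k) \in V$, the restriction $G|_Z$ equals $(\pi|_Z)^* F$. If this pullback vanishes, then dominance of $\pi|_Z$ together with $Z$ being a variety (hence reduced and irreducible) forces $F \equiv 0$ on $\mathbb{P}^k$, and hence $G = 0$. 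Thus restriction is injective on $V$, which yields the desired bound.

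The argument is essentially mechanical; the only things to verify carefully are that for generic choice of the $L_i$ the projection center misses $Z$ and the induced map $Z \to \mathbb{P}^k$ is dominant, both of which are standard consequences of $\dim Z = k$ and a routine incidence computation.
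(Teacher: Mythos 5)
Your proof is correct. The paper itself gives no proof of this lemma --- it is quoted as a standard fact from \cite{riedl-yang1} --- and your argument via a general linear projection $\pi\colon Z \to \P^k$ (finite since the general $(n-k-1)$-plane center misses the $k$-dimensional $Z$, hence dominant, so that the $\binom{d+k}{k}$-dimensional space of forms $F(L_0,\dots,L_k)$ restricts injectively to $Z$) is precisely the standard proof; all the genericity claims you flag are routine and hold as stated.
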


We need the following Proposition from \cite{RY2}.

\begin{proposition}
Let $C \subsetneq \pG(k-1,n)$ be a nonempty variety of $(k-1)$-planes and let $B \subset \pG(k,n)$ be the set of $k$-planes containing the planes in $C$. Then if the codimension of $C$ in $\pG(k-1,n)$ is $\epsilon$, the codimension of $B$ in $\pG(k,n)$ is at most $\epsilon-1$.
\end{proposition}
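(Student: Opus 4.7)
The plan is to exploit the incidence correspondence
\[
I = \{([\Lambda_{k-1}], [\Lambda_k]) \in \pG(k-1,n) \times \pG(k,n) : \Lambda_{k-1} \subset \Lambda_k\},
\]
together with its two projections $\pi_1$ to $\pG(k-1,n)$ and $\pi_2$ to $\pG(k,n)$, whose fibers have dimensions $n-k$ and $k$ respectively. Setting $I_C = \pi_1^{-1}(C)$, one has $\dim I_C = \dim C + (n-k)$, and by the definition of $B$ the map $\pi_2|_{I_C}$ surjects onto $B$. Letting $f$ denote the dimension of a general fiber of $\pi_2|_{I_C}$, a brief arithmetic check yields
\[
\mathrm{codim}(B, \pG(k,n)) = \epsilon - k + f,
\]
so the proposition reduces to establishing that $f \leq k-1$.

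The fiber of $\pi_2|_{I_C}$ over $[\Lambda_k]$ is the locus of $(k-1)$-planes inside $\Lambda_k$ that belong to $C$, a closed subvariety of $\pG(k-1,\Lambda_k) \cong \P^k$. Hence $f \leq k$ for free, and the entire content of the proposition lies in ruling out the extremal case $f = k$, which amounts to saying that for a general $[\Lambda_k] \in B$, \emph{every} $(k-1)$-plane contained in $\Lambda_k$ already lies in $C$.

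The main obstacle, and where I would expect to spend the most effort, is ruling out this extremal case by contradiction. If $f = k$, then dominance of $\pi_2|_{I_C}$ onto $B$ implies that for a general pair $([\Lambda_{k-1}], [\Lambda_k]) \in I_C$, the $k$-plane $\Lambda_k$ lies in the dense open subset of $B$ on which the extremal property holds. Hence for a general $[\Lambda_{k-1}] \in C$ and a general $\Lambda_k \supset \Lambda_{k-1}$, every other $(k-1)$-plane in $\Lambda_k$ -- each of which meets $\Lambda_{k-1}$ in a $(k-2)$-plane -- must also lie in $C$. Iterating, $C$ would be closed under the relation ``$\Lambda$ and $\Lambda'$ span a $k$-plane''. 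Any two $(k-1)$-planes in $\P^n$ are joined by a short chain under this relation (for instance, replace one spanning vector at a time), so $C$ would have to equal all of $\pG(k-1,n)$, contradicting the hypothesis $C \subsetneq \pG(k-1,n)$.
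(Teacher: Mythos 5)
The paper offers no proof of this proposition at all --- it is imported verbatim from \cite{RY2} --- so there is nothing internal to compare against, and I am judging your argument on its own. Your setup and reduction are correct: with $I_C = \pi_1^{-1}(C)$ one has $\dim I_C = \dim C + (n-k)$, the codimension identity $\mathrm{codim}(B) = \epsilon - k + f$ checks out, and since each fiber of $\pi_2|_{I_C}$ is a closed subvariety of the $\P^k$ of hyperplanes of $\Lambda_k$, everything reduces to excluding $f = k$, i.e.\ excluding that for a general $[\Lambda_k] \in B$ \emph{every} hyperplane of $\Lambda_k$ lies in $C$. You have correctly located the crux.

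The gap is the sentence ``Iterating, $C$ would be closed under the relation.'' What you have actually established is closure under the spanning relation only for a \emph{general} $[\Lambda_{k-1}] \in C$ and a \emph{general} $\Lambda_k \supset \Lambda_{k-1}$: the new hyperplanes you produce need not be general points of $C$, and even for a general starting plane only a general span is known to land in the good open subset of $B$. The chain argument, however, must reach an \emph{arbitrary} $(k-1)$-plane through \emph{specific} intermediate spans, so generality is lost at the very first iteration. This is not cosmetic: a single general step starting from $\Lambda_0$ only sweeps out the roughly $n$-dimensional locus of $\Lambda'$ with $\mathrm{span}(\Lambda_0,\Lambda')$ in the good open set, which is much smaller than $\dim \pG(k-1,n) = k(n-k+1)$ once $k \ge 2$, so iteration is genuinely needed and must be justified. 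The repair is short. First, by upper semicontinuity of fiber dimension the locus of $[\Lambda_k] \in B$ whose fiber has dimension $\ge k$ is closed, and a $k$-dimensional closed subvariety of $\P^k$ is all of $\P^k$; hence if $f = k$ then for \emph{every} $[\Lambda_k] \in B$ every hyperplane of $\Lambda_k$ lies in $C$ (take $C$ closed, replacing it by its closure if necessary, which changes no codimensions). Second, by the very definition of $B$, \emph{every} $k$-plane containing \emph{any} plane of $C$ lies in $B$. Combining the two, $C$ is closed under the spanning relation on the nose, and your chain (replace one spanning vector at a time) forces $C = \pG(k-1,n)$, contradicting $C \subsetneq \pG(k-1,n)$. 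With that patch the argument is complete.
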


We use the following corollary which follows immediately from the previous proposition.

\begin{corollary}
\label{cor-cutByHyperplane}
Let $\ell \subset \P^n$ be a line and let $S_k$ be the variety of $k$-planes containing $\ell$. If $C \subset S_{k-1}$ is a nonempty variety of $(k-1)$-planes of codimension $\epsilon > 0$, and $B \subset S_k$ is the set of $k$-planes that contain a plane of $C$, then the codimension of $B$ in $S_k$ is at most $\epsilon -1$.
\end{corollary}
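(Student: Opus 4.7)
The plan is to deduce this as an instance of the previous proposition after projecting from the fixed line $\ell$. Projection from $\ell$ is a rational map from $\P^n$ to $\P^{n-2}$ which induces a bijection between $j$-planes in $\P^n$ containing $\ell$ and $(j-2)$-planes in $\P^{n-2}$, for each $j \geq 1$. In particular this gives isomorphisms $S_k \cong \pG(k-2, n-2)$ and $S_{k-1} \cong \pG(k-3, n-2)$, and incidence is preserved: a $k$-plane in $S_k$ contains a $(k-1)$-plane in $S_{k-1}$ if and only if the corresponding $(k-2)$-plane in $\P^{n-2}$ contains the corresponding $(k-3)$-plane.

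Under these identifications, $C \subset S_{k-1}$ corresponds to a subvariety $\tilde{C} \subset \pG(k-3, n-2)$ of the same codimension $\epsilon$, and $B \subset S_k$ corresponds to the subvariety $\tilde{B} \subset \pG(k-2, n-2)$ consisting of those $(k-2)$-planes that contain a plane of $\tilde{C}$. Applying the previous proposition, with $n$ replaced by $n-2$ and $k$ replaced by $k-2$, yields $\mathrm{codim}_{\pG(k-2, n-2)} \tilde{B} \leq \epsilon - 1$, which translates back to $\mathrm{codim}_{S_k} B \leq \epsilon - 1$.

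The only thing to verify is that the projection identifications are compatible with containment and preserve codimensions, which is routine since we are simply quotienting linear subspaces of $\C^{n+1}$ by the $2$-dimensional subspace corresponding to $\ell$. So I do not foresee any real obstacle beyond this unpacking; the statement is essentially the previous proposition transported to the Schubert subvariety $S_k$ via projection.
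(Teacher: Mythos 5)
Your proposal is correct and matches the paper's (implicit) argument: the paper offers no proof beyond asserting that the corollary ``follows immediately'' from the preceding proposition, and the projection from $\ell$ identifying $S_j$ with $\pG(j-2,n-2)$ compatibly with containment and codimension is exactly the standard way to make that reduction precise. The index bookkeeping (apply the proposition with $n-2$ in place of $n$ and $k-2$ in place of $k$) checks out.
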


We also discuss the notion of a \emph{parameterized $k$-plane}. A parameterized $k$-plane is a map $\Lambda: \P^k \to \P^n$ defined by linear equations. The set of all parameterized $k$-planes is naturally a $PGL_k$ bundle over the Grassmannian of $k$-planes in $\P^n$. Given a parameterized $k$-plane $\Lambda$, we have a natural map $H^0(\P^n, \O_{\P^n}(d)) \to H^0(\P^k, \O_{\P^k}(d))$ given by pulling polynomials back to $\P^k$ along $\Lambda$. We say that $X' \subset \P^k$ is a parameterized $k$-plane section of $X \subset \P^n$ if $X'$ is cut out by the restriction of $f$ to $\P^k$, where $X = V(f)$.

We now show that $F_1(X)$ contains no $k$-gonal curves for certain ranges of $n$, $d$ and $k$.

\begin{theorem}
\label{thm-noKgonalCurves}
If $n \leq \frac{d(d+3)}{6}+1-\frac{k}{3}$, then a very general degree $d$ hypersurface $X$ in $\P^n$ contains no $k$-gonal curves in $F_1(X)$.
\end{theorem}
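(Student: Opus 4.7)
The plan is to prove the theorem by a dimension count in the spirit of \cite{RY2}. Suppose, for contradiction, that a very general hypersurface $X\subset\P^n$ of degree $d$ satisfying the stated inequality contains a $k$-gonal curve $C$ in $F_1(X)$, with a degree $k$ morphism $\pi\colon C\to\P^1$.

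First I would extract the geometric picture. The curve $C$ determines a ruled surface $S\subset X$ (the union of the lines parametrized by $C$), and the $k$-gonal pencil $\pi$ further structures $S$ as a fibration $S\to\P^1$ whose general fiber is a union of $k$ lines $\ell_1(t),\dots,\ell_k(t)$ spanning a linear subspace $\Lambda_t\subset\P^n$ of dimension at most $2k-1$. By Lemma \ref{lem-subvarConditions}, containing the two-dimensional surface $S$ imposes at least $\binom{d+2}{2}$ conditions on $X$.

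Next I would bound the moduli of the configurations $(C,\pi,\iota\colon C\hookrightarrow F_1(X)\subset\pG(1,n))$. The Hurwitz space of degree $k$ covers of $\P^1$ controls the gonality data, and the embedding $\iota$ is parametrized by tracking the Plücker degree of the image. Applying Corollary \ref{cor-cutByHyperplane} iteratively to bound the codimension of $k$-tuples of lines with a given spanning plane in the appropriate Grassmannians, and using Lemma \ref{lem-pointsConditions} to control how these $k$ lines intersect fixed linear sections of $X$, I would expect to obtain an upper bound on the dimension of admissible $(C,\pi,\iota)$ of the form $3(n-1)+k$ modulo lower order terms (the factor $3$ arising from the partial flag variety of (point, line, 2-plane) in $\P^n$, and the summand $k$ reflecting the freedom in the $k$-gonal pencil). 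Comparing this with the $\binom{d+2}{2}$ conditions from the previous step and the hypothesis that the projection of the incidence correspondence to $|\O_{\P^n}(d)|$ is dominant would force $\binom{d+2}{2}\geq 3(n-1)+k+1$, which rearranges to $n\leq\frac{d(d+3)}{6}+1-\frac{k}{3}$, exactly the theorem's boundary.

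The main obstacle is making this parameter count fully rigorous and uniform across all degeneration types. The chief subtleties are: handling fibers of $\pi$ in which the $k$ lines fail to be in linear general position (e.g., all meeting at a common point), so that $\Lambda_t$ has strictly smaller dimension and the conditions on the restriction $F|_{\Lambda_t}$ change accordingly; dealing with reducible or non-reduced $C$, which should be addressed by decomposing into irreducible components and running the count on each; and controlling the Plücker degree of $\iota$, which enters implicitly through the dimension estimate for maps $C\to\pG(1,n)$. Lemma \ref{lem-pointsConditions} and Corollary \ref{cor-cutByHyperplane} are tailored precisely to these degenerate configurations, so a careful case split, combined with a degeneration argument reducing to the generic case, should complete the count.
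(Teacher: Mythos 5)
There is a genuine gap: your dimension count cannot be made rigorous, and it omits the essential input of the paper's argument. The central problem is that the family of $k$-gonal curves in $F_1(X)\subset\pG(1,n)$ (equivalently, of the associated ruled surfaces $S\subset\P^n$) has no a priori bounded dimension: as the Pl\"ucker degree of $C$ grows, so does the dimension of the space of such configurations, so your claimed bound of roughly $3(n-1)+k$ on the moduli of $(C,\pi,\iota)$ has no justification, and the comparison with the $\binom{d+2}{2}$ conditions from Lemma \ref{lem-subvarConditions} collapses. You acknowledge the Pl\"ucker-degree issue at the end but offer no mechanism to control it, and none is available at this level of generality; note also that $\binom{d+2}{2}$ is the number of conditions to contain a \emph{fixed} surface, which only helps if the family of candidate surfaces has dimension strictly smaller than that. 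The fact that your inequality $\binom{d+2}{2}\geq 3(n-1)+k+1$ rearranges to the theorem's bound is a numerical coincidence: in the paper the bound arises as $n\leq\frac{m+d+4}{3}$ with $m=\frac{d(d+1)}{2}-k$, from an entirely different mechanism.

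What the paper actually does, and what your proposal never invokes, is use Corollary \ref{cor-BVAFano} (resting on the Ein--Lazarsfeld--Ullery theorem on $BVA_k$ canonical bundles) to produce a base case: for $m=\frac{d(d+1)}{2}-k$ there is a pair $(\ell_0,X_0)$ in $\P^m$ with no $k$-gonal curve of $F_1(X_0)$ through $[\ell_0]$. It then takes a general point $(\ell_1,X_1)$ of any component of the bad locus $R_{n,d,k}\subset\uU\lL_{n,d}$, realizes both pairs as parameterized linear sections of a single $(Y,\ell_2)$ with $Y=V(f_0+f_1)\subset\P^{m+n-1}$, and applies Corollary \ref{cor-cutByHyperplane} repeatedly to conclude that the bad locus meets the family of parameterized $n$-plane sections in codimension at least $m-n+1$; comparing with $\dim F_1(X)=2n-d-3$ yields the stated bound. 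Some positivity input on $F_1(X)$ of this kind is indispensable --- a pure incidence count against the family of ruled surfaces cannot detect gonality at all --- so your outline would need to be restructured around Corollary \ref{cor-BVAFano} and the linear-section degeneration rather than around Lemma \ref{lem-subvarConditions}.
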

\begin{proof}
Let $\uU \lL_{n,d}$ be the set of pairs $(\ell, X)$ of lines $\ell$ lying in a hypersurface $X$ of degree $d$ in $\P^n$. Let $R_{n,d,k}$ be the space of $(\ell, X)$ such that $F_1(X)$ has a $k$-gonal curve passing through $[\ell]$. We see that $R_{n,d,k}$ will be a countable union of varieties. If for some tuple $(n,d,k)$, the codimension of (each component of) $R_{n,d,k}$ in $\uU \lL_{n,d}$ is at least $2n-d-3$, then a general hypersurface $X$ of degree $d$ in $\P^n$ will contain no $k$-gonal curves in $F_1(X)$.

Let $d \geq 3$ be an integer, let $m = \frac{d(d+1)}{2}-k$, and let $X \subset \P^m$ be a general hypersurface of degree $d$ in $\P^m$. By Corollary \ref{cor-BVAFano}, we see that $R_{m,d,k} \subset \uU \lL_{m,d}$ has codimension at least $1$, so we can find some pair $(\ell_0, X_0)$ where $F_1(X_0)$ has no $k$-gonal curves through $[\ell_0]$.

Let $(\ell_1,X_1)$ be a general point of a component of $R_{n,d,k}$. We find a subvariety $S \subset \uU \lL_{n,d}$ containing $(\ell_1, X_1)$ such that $S \cap R_{n,d,k}$ is of codimension at least $2n-d-3$ in $S$. Since $(\ell_1, X_1)$ could have been on any component, it will follow that $R_{n,d,k}$ will have codimension at least $2n-d-3$ in $\uU \lL_{n,d}$. We now construct $S$.

We claim that we can find a hypersurface $Y \subset \P^M$ containing a line $\ell_2$ such that $(\ell_0, X_0)$ and $(\ell_1,X_1)$ are both parameterized linear sections of $(Y,\ell_2)$. To see this, choose coordinates $x_0, x_1, \dots, x_n$ on $\P^n$ so that $\ell_1$ is given by the vanishing of $x_2, \dots, x_n$, and choose coordinates $x_0, x_1, y_2, \dots, y_m$ on $\P^m$ so that $\ell_0$ is given by the vanishing of $y_2, \dots, y_m$. Write $X_0 = V(f_0)$ and $X_1 = V(f_1)$. Then we may take $Y = V(f_0+f_1)$ in $\P^{m+n-1}$, which will have the desired properties.

Let $S_n$ be the closure of the set of parameterized $n$-plane sections of $(Y, \ell_2)$. We see by the fact that $(\ell_0,X_0)$ has no $k$-gonal curves in $F_1(X_0)$ passing through $[\ell_0]$ that $S_m \cap R_{m,d,k}$ is codimension at least one in $S_m$. By Corollary \ref{cor-cutByHyperplane}, we see that $S_{m-1} \cap R_{m-1,d,k}$ has codimension at least $2$ in $S_{m-1}$. Thus, $S_n \cap R_{n,d,k}$ has codimension at least $m-n+1$ in $S_n$. Therefore, if $m-n+1 \geq 2n-d-3$, we see that there will be no $k$-gonal curves in $F_1(X)$ for $X$ a general hypersurface of degree $d$ in $\P^n$. This will hold if
\[ n \leq \frac{m+d+4}{3} = \frac{d(d+3)}{6}+\frac{4}{3} - \frac{k}{3}. \]
\end{proof}

\begin{proposition}
\label{prop-fewLinesThroughPoint}
If $X \subset \P^n$ is a general degree $d \geq n$ hypersurface, then any $p \in X$ has at most $n-1$ lines in $X$ passing through $p$.
\end{proposition}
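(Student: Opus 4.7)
The plan is to bound the dimension of the incidence
\[ I \;=\; \bigl\{(X,p,\ell_1,\dots,\ell_n) \in \P^N \times \P^n \times \pG(1,n)^n \;\bigm|\; p \in X,\; p \in \ell_i \subset X,\; \ell_i \text{ distinct}\bigr\}, \]
where $\P^N$ parametrizes the degree $d$ hypersurfaces in $\P^n$, and to conclude from $\dim I < N$ that the projection $I \to \P^N$ is not dominant, so that a general $X$ admits no point with $n$ concurrent lines in $X$.

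First I would project $I \to W := \{(p,\ell_1,\dots,\ell_n)\}$. The fiber over a configuration is the linear system of degree $d$ hypersurfaces containing $L := \ell_1 \cup \cdots \cup \ell_n$, and its codimension in $\P^N$ equals the rank $r$ of the restriction $H^0(\P^n, \O(d)) \to H^0(L, \O_L(d))$. I would stratify $W$ by the integer $s := 1+\dim\mathrm{span}(v_1,\dots,v_n) \subseteq \P^{n-1}$, where $v_i$ is the direction of $\ell_i$; the stratum $W_s$ has dimension $n + s(n-s) + n(s-1) = 2ns - s^2$, corresponding to the parameters for $p$, the spanning subspace $\P^{s-1} \subset \P^{n-1}$, and the $v_i$'s inside it.

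On the top stratum $s = n$, taking coordinates so that $p = [1{:}0{:}\cdots{:}0]$ and $v_i = e_i$, the monomials $x_0^d$ together with $x_0^{d-j} x_i^j$ (for $1 \le i \le n$, $1 \le j \le d$) restrict to a basis of $H^0(L, \O_L(d))$, so $r = 1 + nd$ and
\[ \dim I|_{W_n} \;\le\; n^2 + (N - 1 - nd) \;=\; N - 1 + n(n - d), \]
which is $\le N - 1$ precisely when $d \ge n$.

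The main obstacle is handling the lower strata $s < n$, where $L$ lies in a proper subspace $\P^s \subsetneq \P^n$ and the restriction may drop rank. Here I would factor $H^0(\P^n, \O(d)) \twoheadrightarrow H^0(\P^s, \O(d)) \to H^0(L, \O_L(d))$ and compute $r$ as the rank of the second map via the minimal free resolution of the ideal of $L$, viewed as the cone with apex $p$ over $n$ points in $\P^{s-1}$. A stratum-by-stratum check should then show that the drop in $r$ is compensated by the drop in $\dim W_s$: for instance, on the coplanar stratum $s = 2$ one obtains $r = \binom{d+2}{2} - \binom{d-n+2}{2}$, giving $\dim I|_{W_2} \le N - 1$ for $d \ge n$ after a short arithmetic check. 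Combining all strata then yields $\dim I \le N - 1 < N$, from which the proposition follows.
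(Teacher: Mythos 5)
Your approach is genuinely different from the paper's. The paper fixes the point $p$, writes $f=f_1x_0^{d-1}+\cdots+f_d$ in coordinates with $p=[1:0:\cdots:0]$, identifies the lines through $p$ with $V(f_1,\dots,f_d)\subset\P^{n-1}$, and bounds the bad locus inductively: the locus where $V(f_i)$ contains a component of $V(f_1,\dots,f_{i-1})$ has codimension at least $\binom{n}{i}\ge n$, after which imposing $f_d$ to vanish at $k\le d+1$ of the remaining finitely many points is exactly $k$ conditions. Your incidence variety of $n$-tuples of concurrent lines is a legitimate alternative, and your top stratum $s=n$ is handled correctly (and is the tight case). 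But as written there is a gap at the intermediate strata $3\le s\le n-1$: the rank $r$ is \emph{not} a function of $s$ alone, so ``compute $r$ via the minimal free resolution of the ideal of $L$'' is not well defined on $W_s$. Concretely, $r=1+\sum_{j=1}^d h_Z(j)$, where $h_Z$ is the Hilbert function of the set $Z=\{v_1,\dots,v_n\}\subset\P^{s-1}$ of directions, and for $n$ points merely spanning $\P^{s-1}$ this varies considerably (for instance $n-1$ of the $v_i$ could be collinear). Your ``short arithmetic check'' is carried out only for the two strata $s=n$ and $s=2$ on which $r$ happens to be constant; for the intermediate strata either a finer stratification or a uniform bound is needed, and that is exactly where the content of the proposition lies.

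The gap is fillable, and in fact your stratification by $s$ suffices once you replace the exact computation of $r$ by a lower bound valid on all of $W_s$. For $n$ distinct points spanning $\P^{s-1}$ one has the standard bound $h_Z(j)\ge\min(n,\,s+j-1)$, whence for $d\ge n$
\[ r \;\ge\; 1+\frac{(n-s+1)(n+s)}{2}+(d-n+s-1)\,n , \]
and every fiber of $I|_{W_s}\to W_s$ has dimension at most $N-r$ with $r$ bounded as above. Comparing with $\dim W_s=2ns-s^2$, the required inequality $\dim W_s\le r-1$ reduces, in the worst case $d=n$, to $(n-s)\bigl((n-s)-1\bigr)\ge 0$, which holds for every $s$, with equality exactly at $s=n$ and $s=n-1$. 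So your argument closes, but only after this extra input; as proposed, the deferred ``stratum-by-stratum check'' would not go through by computing a single resolution per value of $s$.
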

\begin{proof}
It suffices to prove the result for $d = n$. Let $\uU$ be the incidence correspondence of pairs $(p,X)$ where $X \subset \P^n$ is a degree $n$ hypersurface and $p \in X$ is a point. Let $\lL_m \subset \uU$ be the set of pairs $(p,X)$ such that $X$ contains $m$ lines passing through $p$. We wish to show that $\lL_{m}$ has codimension at least $n$ in $\uU$ for $m \geq n$, from which the result will follow. Consider a point $p \in \P^n$. Choose coordinates on $\P^n$ so that $p$ is the point $[1,0,\dots,0]$. Then given an $X = V(f)$ containing $p$, we can expand the equation of $f$ around $p$, writing $f = f_1 x_0^{d-1} + f_2 x_0^{d-2} + \dots + f_d$. Then the space of lines in $X$ passing through $p$ will be $\Sigma_p = V(f_1, \dots, f_d) \subset \P^{n-1}$, where $\Sigma_p$ is naturally contained in the $\P^{n-1}$ of lines in $\P^n$ passing through $p$. We consider the codimension of the locus of $f$ for which $V(f_1, \dots, f_i)$ has larger than expected dimension. Many of the ideas here are adapted from \cite{HRS} and \cite{riedl-yang1}.

If $i \leq n-1$, the locus of $f$ where $V(f_i)$ contains a component of $V(f_1, \dots, f_{i-1})$ has codimension at least $\binom{n-i+i}{i} = \binom{n}{i} \geq n$, so the locus $S \subset \uU$ of the set of pairs $(p,X)$ where there is a positive dimensional family of lines through $p$ in $X$ has codimension at least $n$. Thus, we may assume $V(f_1, \dots, f_{n-1})$ is a finite set of points. For $f_n$ to contain $k$ of those points is $k$ conditions by Lemma \ref{lem-pointsConditions}, so the locus $\lL_m \subset \uU$ is codimension at least $m$. Thus, a general hypersurface $X$ of degree $d$ in $\P^n$ cannot have $n$ lines passing through a single point of $X$. The result follows.
\end{proof}

\begin{proof}[Proof of Corollary \ref{thm-kgonalCurvesInY}]
We simply put together the pieces. Let $\uU \to F_1(X)$ be the universal line on $X$, mapping to $F_1(X)$ via a map $\pi_1$. Then $\uU$ maps surjectively to $Y$ via a map $\pi_2$. Note that $\pi_2$ is finite by Proposition \ref{prop-fewLinesThroughPoint}. Suppose $C \subset Y$ is a $k$-gonal curve, and let $D$ be an irreducible component of $\pi_2^{-1}(C)$. Then by Proposition \ref{prop-fewLinesThroughPoint}, the degree of $\pi_2|_D$ is at most $n-1$, and so $D$ will have gonality at most $k(n-1)$. If $D$ is contracted by $\pi_1$, then $C$ must have been a line in $X$. If $D$ is not contracted by $\pi_1$, then its image will be a curve in $F_1(X)$ of gonality at most $k(n-1)$. By Theorem \ref{thm-noKgonalCurves}, this means $k(n-1) > \frac{d^2+3d+6}{2}-3n = \frac{d^2+3d+6-6n}{2}$. Thus, $k > \frac{d^2+3d+6-6n}{2(n-1)}$, so $Y$ contains no $k$-gonal curves other than lines for $k \leq \frac{d^2+3d+6-6n}{2(n-1)}$. In particular, this bound for $k$ is at least one for every $d \geq n \geq 3$.
\end{proof}


\newcommand{\closer}{\vspace{-1.5ex}}

\end{document}